\newtheorem{theorem}{Theorem}[section]
\newtheorem{proposition}{Proposition}[section]
\newtheorem{lemma}{Lemma}[section]
\newtheorem{corollary}{Corollary}[section]
\theoremstyle{definition}
\newtheorem{remark}{Remark}[section]
\newtheorem{assumption}{Assumption}
\numberwithin{equation}{section}
\newcommand{\beq}{\begin{equation}}
\newcommand{\bea}[1]{\begin{array}{#1} }
\newcommand{\eeq}{ \end{equation}}
\newcommand{\ea}{ \end{array}}
\def\mean#1{\mathchoice%
          {\mathop{\kern 0.2em\vrule width 0.6em height 0.69678ex depth -0.58065ex
                  \kern -0.8em \intop}\nolimits_{\kern -0.4em#1}}%
          {\mathop{\kern 0.1em\vrule width 0.5em height 0.69678ex depth -0.60387ex
                  \kern -0.6em \intop}\nolimits_{#1}}%
          {\mathop{\kern 0.1em\vrule width 0.5em height 0.69678ex
              depth -0.60387ex
                  \kern -0.6em \intop}\nolimits_{#1}}%
          {\mathop{\kern 0.1em\vrule width 0.5em height 0.69678ex depth -0.60387ex
                  \kern -0.6em \intop}\nolimits_{#1}}}
\def\vintslides_#1{\mathchoice%
          {\mathop{\kern 0.1em\vrule width 0.5em height 0.697ex depth -0.581ex
                  \kern -0.6em \intop}\nolimits_{\kern -0.4em#1}}%
          {\mathop{\kern 0.1em\vrule width 0.3em height 0.697ex depth -0.604ex
                  \kern -0.4em \intop}\nolimits_{#1}}%
          {\mathop{\kern 0.1em\vrule width 0.3em height 0.697ex depth -0.604ex
                  \kern -0.4em \intop}\nolimits_{#1}}%
          {\mathop{\kern 0.1em\vrule width 0.3em height 0.697ex depth -0.604ex
                  \kern -0.4em \intop}\nolimits_{#1}}}
\newcommand{\aveint}[2]{\mathchoice%
          {\mathop{\kern 0.2em\vrule width 0.6em height 0.69678ex depth -0.58065ex
                  \kern -0.8em \intop}\nolimits_{\kern -0.45em#1}^{#2}}%
          {\mathop{\kern 0.1em\vrule width 0.5em height 0.69678ex depth -0.60387ex
                  \kern -0.6em \intop}\nolimits_{#1}^{#2}}%
          {\mathop{\kern 0.1em\vrule width 0.5em height 0.69678ex depth -0.60387ex
                  \kern -0.6em \intop}\nolimits_{#1}^{#2}}%
          {\mathop{\kern 0.1em\vrule width 0.5em height 0.69678ex depth -0.60387ex
                  \kern -0.6em \intop}\nolimits_{#1}^{#2}}}
\def\eqn#1$$#2$${\begin{equation} \label#1#2\end{equation}}
\def\charfn_#1{{\raise1.2pt\hbox{$\chi
_{\kern-1pt\lower3pt\hbox{{$\scriptstyle#1$}}}$}}}
\def\qq1{q_*}
\def\q2{q_{**}}
\newdimen\vintbar
\def\vint{-\kern-\vintbar\int}
\def\0{\boldsymbol 0}
\newtoks\by
\newtoks\paper
\newtoks\book
\newtoks\jour
\newtoks\yr
\newtoks\pages
\newtoks\vol
\newtoks\publ
\def\name[#1, #2]{#1 #2}
\def\ota{{\hbox{\bf ???}}}
\def\cLear{\by = \ota\paper = \ota\book = \ota\jour = \ota\yr = \ota
\pages = \ota\vol = \ota\publ = \ota}
\def\endpaper{\the\by, \textit{\the\paper},
{\the\jour} \textbf{\the\vol} (\the\yr), \the\pages.\cLear}
\def\endbook{\the\by, \textit{\the\book},
\the\publ, \the\yr.\cLear}
\def\endpap{\the\by, \textit{\the\paper}, \the\jour.\cLear}
\def\endproc{\the\by, \textit{\the\paper}, \the\book, \the\publ,
\the\yr, \the\pages.\cLear}
\begin{document}

\title[From Moment Explosion to the Distribution Tails]{From Moment Explosion to the Asymptotic Behavior of the Cumulative Distribution for a Random Variable}

\author{ Sidi Mohamed Aly}

\address{Sidi Mohamed Aly\\Department of Mathematics, Uppsala University\\
S-751 06 Uppsala, Sweden}
\email{souldaly@math.uu.se \\ souldaly@gmail.com}

\maketitle

\begin{abstract}
\noindent We study the Tauberian relations between the moment generating function (MGF) and the complementary cumulative distribution function of a random variable whose MGF is finite only on part of the real line. We relate the right tail behavior of the cumulative distribution function of such a random variable to the behavior of its MGF near the critical moment. We apply our results to an arbitrary superposition of a CIR process and the time-integral of this process.

\medskip

\noindent
2000  {\em Mathematics Subject Classification : } 60E10; 62E20; 40E05.
\noindent

\medskip

\noindent
{\it Keywords and phrases: Regular variation; Tauberian theorems; Moment generating function; Tail asymptotic; CIR process.}
\end{abstract}

\section{Introduction}
\noindent Tauberian theory is a powerful tool to deal with the problem of obtaining asymptotic information about a function  from a priori knowledge of the asymptotic behavior of some integral transform of the function. The study of Tauberian type theorems has been historically stimulated by their potential applications in diverse fields of mathematics. In mathematical finance, they allow to derive closed form approximate solutions for large values asymptotic analysis.  

The present paper investigates the Tauberian relations between the moment generating function of a random variable and its complementary cumulative distribution. Intuitively, if we assume that we have a positive random variable $Z$ whose density is given by $P_Z(x) = \frac{1}{\nu}Êe^{- \nu x} 1_{x \geq 0} $, then it is easy to see that MGF of $Z$ is only finite on $[0, \nu[$. It is however much more difficult to prove the other way: i.e. \textit{if the MGF of a random variable is finite only on part of the real line} THEN \textit{the density (or the complementary cumulative distribution function) is exponential}.  This paper derives sufficient condition under which this last statement is true. Furthermore we show that we only need the asymptotic behavior of MGF near the critical moment to obtain the right tail asymptotics of the random variable.

We consider a real-valued random variable $Z$ whose moment generating function ($ MGF :\mu \longmapsto \mathbb{E} \; e^{\mu Z}   $) is finite only on part of the real line and explodes at some critical moment $\mu^\ast < \infty$. We investigate the link between the behavior of $MGF$ near $\mu^\ast$ and the (right) tail distribution of $Z$. In \cite{benaimfriz08}, Benaim and Friz  develop criteria, checkable by looking at the moment generating function of $Z$, which guarantees that  $   \ln \mathbb{P} ( Z> x )  \sim Ê- \mu^\ast  x $, or equivalently 
\begin{equation}\label{equivBenaimFriz}
\lim_{x \to \infty} \frac{\ln \mathbb{P} ( Z> x ) }{x} =Ê- \mu^\ast.
\end{equation}
 A similar statement, but under stronger conditions, appears in \cite{nag07}.  The purpose of this paper is to show that a sharp asymptotic formula of the form 
 \begin{equation}
 \limsup_{x \to \infty} \frac{\ln \mathbb{P} ( Z> x ) - h(x) }{\log (x)} =Êc
 \end{equation}
 can be derived \textit{by only looking at the moment generating function of $Z$ near $\mu^\ast$}.

The link between the asymptotics of a function $f$ (which here represents the density or the complementary cumulative distribution of the random variable) and its integral transforms (Laplace-Stieltjes transform for example : $\hat{f} (x) :=\int_0^\infty e^{-s u} df(u) $) is the main purpose of Abelian (: results in which we pass from a function to its integral) and Tauberian (from integrals to functions) theorems. For example, Karamata's Taberian theorem gives the equivalence between $f$ being regularity varying and $\hat{f} (1/.)$ being so, under some suitable conditions known as "Tauberian conditions" (see eg. \cite{Bingham87} Theorem~4.12.7). For the Tauberian theorems regarding the moment generating functions, Kasahara's Tauberian theorem (cf. \cite{Bingham87} Theorem~4.12.1) links the regular variation property of $\log \mathbb{E} e^{x Z}$ with that of $\log \mathbb{P} (Z>x)$,  as $x \to \infty$, but it is only applicable when the moment generating function is finite for every real number. In fact, the oldest results we could find in the vast literature of Tauberian theory that relates the tail behavior of $Z$ to the behavior of $MGF$ near $\mu^\ast$ are the relatively recent works \cite{benaimfriz08} and \cite{nag07} mentioned above.

Our study is motivated by applications in finance, in particular by the study of models for financial securities whose returns have a known moment generating functions, but whose distribution functions are unknown. It is now well known that  $MGF$ explode for a large class of distributions used for modeling returns in finance (see eg. \cite{piterbarg07}). On the other hand, Lee's celebrated moment formulae \cite{lee04} establishes  an explicit link between $\mu^\ast$ and the large-strike asymptotics of the Black-Scholes implied volatility. And more recently, Benaim and Friz related the asymptotic behavior of the tails of the distribution functions of returns  to the small-strike and large-strike asymptotics of the implied volatility.

 The main part of this work is devoted to show a Tauberian Theorem relating the right tail behavior of the cumulative distribution function of $Z$ to the behavior of the $MGF$ near $\mu^\ast$.  As an application of our result, we give sharp asymptotic formulas for any superposition of a CIR process and its time-integral. Our results can also be applied to some time-changed L\'evy models, where several examples have been given in \cite{benaimfriz08}.

This paper is organized as follows: In section 2 we give the Tauberian relation between the Laplace transform of a random variable and the complementary cumulative distribution function of this variable.  In section 3 we give an application to the CIR process. We give some proofs in the appendix.

\section{Moment explosion and distribution tails}
\noindent Throughout this paper we consider a random variable $Z$ for which  there exists $\mu^\ast > 0$ such that $\mathbb{E}~ e^{ \mu Z} < \infty$, for any $\mu\in [0, \mu^\ast[$ and 
\[
\lim_{\mu \to \mu^\ast} \mathbb{E}~ e^{ \mu Z} = + \infty.
\]
Our purpose is to derive a sharp asymptotic formula for the cumulative distribution function of $Z$. An upper bound for this distribution can be easily derived using Markov argument:
\begin{equation}\label{uper_bound}
\mathbb{P} (Z  > x ) = \mathbb{P} (e^{ \mu Z } > e^{ \mu x} ) \le e^{    - \mu x + \Lambda(\mu)  }   ,
\end{equation}
whenever $\mu > 0$. It follows that
\begin{equation}\label{uperboun_markov}
\mathbb{P} (Z  > x )  \le e^{ -\sup_{0<\mu <\mu^\ast} \left(    \mu x - \Lambda(\mu)  \right)}  = e^{-\Lambda^\ast (x)},
\end{equation}
where 
\begin{equation}\label{MGF}
 \Lambda(p) := \ln \; \mathbb{E} \; e^{ p Z}  
 \end{equation}
  and  $\Lambda^\ast$ is the Fenchel-Legendre transform of $\Lambda$. We suspect this upper bound to be a good approximation of the $\mathbb{P} (Z  > x ) $.  The main purpose of this paper is to investigate how this upper bound is far from the distribution function under suitable conditions. Let's define the function
  \begin{equation}\label{PHI}
   \varphi (x) ~:  x \longmapsto   \ln \left( \mathbb{E} \; e^{(\mu^\ast - \frac{1}{x}) Z} \right) =  \Lambda(\mu^\ast - \frac{1}{x} )
  \end{equation}
    In \cite{benaimfriz08}, it's shown that if $\varphi$ is regularly varying, then 
  \[
  \ln \mathbb{P} (Z > x) \sim_{x \to \infty} - \mu^\ast x~: ~~~ \lim_{x \to  \infty} \frac{ \ln \mathbb{P} (Z > x)+ \mu^\ast x }{x} = 0.
  \]
The main result of this paper shows that if we assume that $\varphi$ is regularly varying and that $\varphi$ is sufficiently differentiable-monotone at infinity (Assumption~\ref{hyp_laregdev}) then the Fenchel-Legendre transform of $\Lambda$ gives a very sharp expansion of the  complementary cumulative distribution of $Z$.

\subsection{Regular variation theory}
Let us first recall briefly some facts from the theory of regular variation. Let $ f ~Ê: ~Ê[0, \infty[ \longrightarrow Ê[0, \infty[ $ be a measurable function. We say $f$ has regular variation of order $\alpha$ at infinity, $ U \in R_{\alpha}$, if there exists a real number $\alpha$ such that for every $x>0$,
\begin{equation}\label{regularvariation_def}
\lim_{t \to \infty} \frac{f (t x)}{f (t)} = x^\alpha.
\end{equation}
The definition may be extended to allow $\alpha = \pm \infty$ where
\[
x^{\pm \infty} = \lim_{t \to \pm  \infty} x^t.
\]
To see which function $f$ can satisfy (\ref{regularvariation_def}), we recall the representation-characterization theorem, which can be found eg. in \cite{Bingham87} (cf. Threorem~1.3.1 and Threorem~1.4.1)

\begin{theorem}[Representation-Characterization Theorem]
The function $f>0$ is measurable such that (\ref{regularvariation_def}) holds for all $x > 0$ if and only if it may be written in the form
\begin{equation}\label{representation_RV}   
f(x) = x^\alpha \exp \left( c(x) + \int_a^x \frac{\epsilon(u)}{u} d u  \right)~~(x>a)
\end{equation}
for some $a>0$, where $c(.)$ is measurable and $ c(x) \to c \in \mathbb{R} $, $ \epsilon(x) \to 0 $ as $x \to \infty$.
\end{theorem}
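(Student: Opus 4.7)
The plan is to split the statement into the easy sufficiency direction (representation $\Rightarrow$ regular variation) and the substantive necessity direction, which is Karamata's classical argument.

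\textbf{Sufficiency.} Assuming the representation, I compute directly
\[
\frac{f(tx)}{f(t)} \;=\; x^\alpha \exp\!\Bigl( c(tx)-c(t) + \int_t^{tx}\frac{\epsilon(u)}{u}\,du \Bigr).
\]
Since $c(tx)-c(t)\to 0$, and the substitution $u=ts$ yields $\int_t^{tx}\epsilon(u)/u\,du = \int_1^x \epsilon(ts)/s\,ds \to 0$ by $\epsilon\to 0$ together with dominated convergence on the fixed compact interval $[\min(1,x),\max(1,x)]$, the limit is $x^\alpha$.

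\textbf{Necessity.} First I reduce to slow variation ($\alpha=0$) by replacing $f$ with $g(x):=f(x)/x^\alpha$, which satisfies $g(tx)/g(t)\to 1$. Taking logarithms on the multiplicative scale, I set $h(y):=\log g(e^y)$; the assumption becomes the additive statement that $h(y+u)-h(y)\to 0$ as $y\to\infty$ for each fixed $u\in\R$. The crux is upgrading this to uniform convergence in $u$ on compact sets, which is the \emph{Uniform Convergence Theorem} for measurable slowly varying functions. I would prove it by a Steinhaus/Egorov-type argument: for $\delta>0$ and $T$ large, the sets $E_T=\{u\in[0,2] : |h(y+u)-h(y)|<\delta \text{ for all }y\ge T\}$ eventually have full measure on $[0,2]$, and by the standard Steinhaus lemma $E_T-E_T$ contains a neighborhood of $0$, forcing uniformity on $[0,1]$ and then, by iteration, on any compact set. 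This is the only delicate measure-theoretic ingredient and is the step I expect to be the main obstacle; without continuity one cannot simply pass the $\lim$ through a sup.

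\textbf{Extracting the representation.} Given the UCT, define the averaged function $H(y):=\int_{y-1}^{y}h(s)\,ds$. Two facts then follow immediately:
\[
H(y)-h(y) \;=\; \int_{y-1}^{y}\bigl(h(s)-h(y)\bigr)\,ds \;\longrightarrow\; 0,
\qquad
H'(y) \;=\; h(y)-h(y-1) \;=:\; \eta(y) \;\longrightarrow\; 0,
\]
the first by UCT applied on $[-1,0]$, the second by the pointwise statement with $u=-1$. Writing $H(y)=H(a)+\int_a^{y}\eta(s)\,ds$ and adding back $h(y)-H(y)=:c_0(y)\to 0$,
\[
h(y) \;=\; H(a) + c_0(y) + \int_a^{y}\eta(s)\,ds.
\]
Transporting back via $x=e^y$ and $s=\log u$ (so $ds=du/u$), with $\epsilon(u):=\eta(\log u)\to 0$ and $c(x):=H(a)+c_0(\log x)\to H(a)\in\R$, gives exactly the representation \trif{representation_RV} for $g$. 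Multiplying by $x^\alpha$ restores $f$.

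The whole argument is essentially the Karamata representation theorem; the forward step and the final bookkeeping are routine once the UCT is in hand, so the expository weight of the proof falls almost entirely on establishing uniform convergence from measurability alone.
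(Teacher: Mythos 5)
The paper does not prove this theorem; it quotes it verbatim from Bingham--Goldie--Teugels (Theorems 1.3.1 and 1.4.1), so there is no internal proof to compare against. Your argument is exactly the classical Karamata proof given in that reference: reduce to slow variation, pass to additive form $h(y)=\log g(e^y)$, establish the Uniform Convergence Theorem (UCT), and then extract the representation by the averaging device $H(y)=\int_{y-1}^y h$, obtaining $\eta=H'\to 0$ and $c_0=h-H\to 0$. The sufficiency direction and the bookkeeping at the end are correct.

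Two small points on the necessity direction. First, before you can form $H(y)=\int_{y-1}^y h(s)\,ds$ you need $h$ to be locally integrable on $[Y,\infty)$ for some $Y$; this is not part of the hypotheses but is an easy consequence of the UCT (once $\sup_{u\in[0,1]}|h(y+u)-h(y)|\le 1$ for $y\ge Y$, iterate to get local boundedness), and is worth stating since it is what licenses the construction. Second, the sets $E_T=\{u\in[0,2]:|h(y+u)-h(y)|<\delta\ \forall y\ge T\}$ you propose are uncountable intersections of measurable sets, so their measurability is not automatic; the actual Steinhaus-type argument in \cite{Bingham87} (Theorem 1.2.1) avoids this by arguing by contradiction along a sequence $t_n\to\infty$, $u_n\in[0,A]$ with $|h(t_n+u_n)-h(t_n)|\ge\epsilon_0$, and applies dominated convergence to the manifestly measurable sets $E_n=\{s:|h(t_n+s)-h(t_n)|\ge\epsilon_0/2\}$ and their translates to force $|E_n|\to 0$ while a covering argument forces $|E_n|$ bounded away from $0$. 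You correctly flag UCT as the delicate step, so this is a matter of tightening the sketch rather than a wrong idea.
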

In particular every regularly varying function with index $\alpha$ can be written as
\[
f(x) = x^\alpha l(x),~~\textrm{where}~~ l  ~\textrm{ is slowly varying} ~~Ê(l \in R_0).
\]  
For a function $f$ defined,  locally bounded on $ [X, \infty[$, for some $X>0$, and such that $\lim_{x \to \infty} f(x) = + \infty$, the generalized inverse of $f$ 
\[
f^{\leftarrow} (y) := \inf \left\{Ê x \in [X, \infty[~ :  f (x) > y\right\} 
\]
is defined on  $ [f(X), \infty[$  and is monotone increasing to $\infty$.  In particular, if $f \in R_{\alpha}$ with $\alpha > 0$, then Theorem 1.5.12 in \cite{Bingham87} asserts that $f^{\leftarrow} \in R_\frac{1}{\alpha}$.

\subsection{Function with smooth variation }
 A positive function $g$ defined on $[X, \infty[$, for some $X$ sufficiently large,  varies smoothly with index $\alpha > 0$ (we denote $g \in SR_{\alpha}$) iff $ h(x) := \log (g(e^x))$ is $\mathcal{C^\infty}$ and 
\[
h'(x) \longrightarrow \alpha, ~~ h^{(k)} (x) \longrightarrow 0 ~~\textrm{for} ~~n = 2,3,~Ê\dots ~~\textrm{as} ~~Êx \to \infty.
\]
In particular $SR_\alpha \subset R_\alpha$. (for more of this, see \cite{Bingham87} Section 1.8). The next result will be useful later in this article
\begin{proposition}\label{derivataive_SR}
Let $\alpha > 0$ and $g \in SR_\alpha$. Then we have 
\begin{equation}
\frac{g'(x)}{ \frac{1}{x} g(x)Ê}  \longrightarrow \alpha,  ~~ \frac{g''(x)}{ \frac{1}{x^2} g(x)Ê}  \longrightarrow -\alpha + \alpha^2, ~~Ê\textrm{as} ~~Êx \to \infty.
\end{equation}
\end{proposition}

\begin{proof}
We have, by definition of $SR_\alpha$,
\[
\frac{\partial }{\partial_x} \log ( g(e^x)) = e^x \frac{g'(e^x)}{   g(e^x)Ê} = h'(x) .
\]
It follows that
\[
\frac{g'(x)}{ \frac{1}{x} g(x)Ê}  \longrightarrow \alpha,  ~~  Ê\textrm{as} ~~Êx \to \infty.
\]
Differentiating  $ g'(x) = \frac{g(x)}{x} h' (\log(x))$ we have
\begin{eqnarray*}
g''(x)  &=& \frac{g(x)}{x^2 } \left(  -h' ( \log(x)) + \frac{g'(x) x}{ g(x)} h' ( \log(x)) + h'' ( \log(x)) \right) 
\\
&=& \frac{1}{x^2} g(x) \left(   - h' (\log (x))  + h'^2 (\log (x)) + h''  (\log (x)) \right) .
\end{eqnarray*}
Thus
\[
 \frac{g''(x)}{ \frac{1}{x^2} g(x)Ê}  \longrightarrow -\alpha + \alpha^2, ~~Ê\textrm{as} ~~Êx \to \infty.
\]
\end{proof}

\subsection{New Tauberian result}
We next present our main result concerning the asymptotic behavior of the cumulative distribution of a random variable $Z$ satisfying Assumption~\ref{hyp_laregdev}.  

\begin{assumption}\label{hyp_laregdev}
There exist $ \mu^\ast > 0$ and $ \alpha  > 0$ such that 
\begin{itemize}
\item $\forall    \mu \in [0, \mu^\ast[, \; \mathbb{E} \; e^{\mu Z} < + \infty $,  
\item the function $ \varphi (x) ~:  x \longmapsto   \ln \left( \mathbb{E} \; e^{(\mu^\ast - \frac{1}{x}) Z} \right)$ is $ R_\alpha$ and   $\mathcal{C}^1 ([X, \infty[) $, for some $ X  $, sufficiently large, such that the function  $x \longmapsto x^{2} \varphi'(x) $ is $ R_{\alpha+1}$ and monotone increasing to $\infty$.
\item The function $\epsilon: x \longmapsto   [ (.)^2 \varphi'(.)]^{\leftarrow} (x)$ is smoothly varying with index $\frac{1}{\alpha + 1}$.
\end{itemize}
\end{assumption}

\noindent We emphasize that the main idea in this paper is to investigate how the upper bound (\ref{uperboun_markov}) is far from the distribution function. Let us here consider the function $ p^\ast$ defined such that: 
\[
 \Lambda^\ast ( x) = p^\ast (x) x - \Lambda(p^\ast (x)) .
 \]
   We can easily see that under Assumption~\ref{hyp_laregdev}, $ p^\ast$ can be written as
\begin{equation}\label{p_astX}
p^\ast  ~:~x \in [X, \infty[ \longmapsto {\Lambda'}^{\leftarrow} (x) =\mu^\ast - \frac{1}{ [ (.)^2 \varphi'(.)]^{\leftarrow} (x)}.
\end{equation}
Assumption~\ref{hyp_laregdev} ensures that the function $p^\ast$ is twice-differentiable and its derivatives are monotonic for large $x$. The hypothesis that $\varphi$ has regular variation appears also in  \cite{benaimfriz08} and is sufficient to derive a statement like (\ref{equivBenaimFriz}) for the distribution tail of $Z$. Here we add the hypothesis of differentiability only in order to ensure the differentiability of $p^\ast$ and to obtain the asymptotic expansion of its derivatives.

This assumption is satisfied by a large family of function with regular variation. For example we can easily check that the case  $\varphi (x) = x^\alpha$ satisfies all the criteria of Assumption~\ref{hyp_laregdev}. It can also be proved that the case  $\varphi (x) = x^\alpha + f(x)$ where $ f $ varies smoothly with index $ \beta < \alpha$  satisfies the assumption as well. In particular, in all examples treated in this paper, the function $\varphi $ takes the latter form.

We next present our main Tauberian result which relates the behavior of the the right tail behavior of the complementary cumulative distribution function of $Z$ to the behavior of the $MGF$ near $\mu^\ast$.  But first we present a kay lemma that will be crucial to prove our main theorem:
  
 \begin{lemma}\label{integral_R}
 Let Assumption~\ref{hyp_laregdev} hold  for some random variable $X$. Denote $\Lambda (p) := \ln \mathbb{E} e^{p X }Ê$ and let $x$ be large enough. Then for any $ \gamma \in \mathbb{R}$ and $\beta \in ]0,1[$, we have
 \begin{equation}
 \int_{x^{\beta-1} }^\infty  { z^{\gamma } e^{\psi_x (z) } d z } =  x^{ \frac{- \alpha }{2 (\alpha + 1) }} f (x),  
 \end{equation}
 where $f$ is slowly varying ($f \in R_0$) and
 \begin{equation}
\psi_x (z) := \left( p^\ast(x) - p^\ast(x z)  \right) x z + \Lambda( p^\ast(x z)) - \Lambda( p^\ast(x  )).
\end{equation}
  \end{lemma}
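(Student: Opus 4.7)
The integrand $z^{\gamma}e^{\psi_R(z)}$ has its global maximum at $z=1$, so the plan is to carry out Laplace's method at this point. A convenient first step is to rewrite $\psi_R$ using the Legendre identity $\Lambda(p^\ast(y))=p^\ast(y)\,y-\Lambda^\ast(y)$, which yields
\[
\psi_R(z) \;=\; p^\ast(R)\,R(z-1) + \Lambda^\ast(R) - \Lambda^\ast(Rz).
\]
Since $(\Lambda^\ast)'=p^\ast$ is increasing, $\psi_R$ is concave with $\psi_R(1)=\psi_R'(1)=0$ and $\psi_R''(z) = -R^2(p^\ast)'(Rz) < 0$, so $z=1$ is the unique global maximum and $\psi_R\le 0$ throughout.

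By Proposition~\ref{derivative_p}, $-\psi_R''(1)=R^2/f_1(R)\in R_{\alpha/(\alpha+1)}$, hence $(-\psi_R''(1))^{-1/2}$ is already of the target form $R^{-\alpha/(2(\alpha+1))}f(R)$ with $f$ slowly varying. The natural change of variables is $u = \sqrt{-\psi_R''(1)}\,(z-1)$, which turns the integral into
\[
\frac{1}{\sqrt{-\psi_R''(1)}}\int_{a_R}^{\infty}\Bigl(1+\tfrac{u}{\sqrt{-\psi_R''(1)}}\Bigr)^{\!\gamma}\exp\!\Bigl(\psi_R\!\Bigl(1+\tfrac{u}{\sqrt{-\psi_R''(1)}}\Bigr)\Bigr) du,
\]
where $a_R = (R^{\beta-1}-1)\sqrt{-\psi_R''(1)}\to-\infty$ since $\beta<1$. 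For each fixed $u$, the prefactor tends to $1$, and a Taylor expansion combined with the estimate $|\psi_R^{(3)}(1)|/(-\psi_R''(1))^{3/2} = (f_1(R))^{3/2}/f_2(R)\in R_{-\alpha/(2(\alpha+1))}$, which vanishes as $R\to\infty$ by the second part of Proposition~\ref{derivative_p}, shows that the exponent converges pointwise to $-u^2/2$.

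The main obstacle is producing a dominating function so that dominated convergence yields $\int_{a_R}^\infty\!\cdots\,du\to\int_{\mathbb{R}}e^{-u^2/2}du=\sqrt{2\pi}$. I would split the $u$-axis into three regimes. On a bounded window $|u|\le M$, the uniform Taylor remainder estimate suffices. On a moderate-deviation window $M\le|u|\le\epsilon\sqrt{-\psi_R''(1)}$, Potter-type bounds applied to the regularly-varying $(p^\ast)'$ furnish a uniform quadratic estimate of the form $\psi_R(z)\le -u^2/4$. For the far tail $|z-1|\ge\epsilon$ (covering both $z\to\infty$ and the truncation region $z\in[R^{\beta-1},1-\epsilon]$), concavity of $\psi_R$ together with $\Lambda^\ast\in R_1$ and Potter's inequalities for $p^\ast$ forces $\psi_R(z)\le -c_\epsilon\Lambda^\ast(R)\to-\infty$, which dominates the factor $z^\gamma$ for any $\gamma\in\mathbb{R}$. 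Assembling the three regimes gives the equivalence $\int_{R^{\beta-1}}^\infty z^\gamma e^{\psi_R(z)}dz\sim\sqrt{2\pi}/\sqrt{-\psi_R''(1)}$, which is of the required slowly-varying form.
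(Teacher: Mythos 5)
Your proposal carries out the same underlying strategy as the paper --- Laplace's method at the maximum $z=1$ --- but the technical organization is genuinely different and, in places, cleaner. Where the paper splits the integral at $z=1$ into $I(\gamma)$ and $J(\gamma)$, analyzes $I(\gamma)$ by Taylor-expanding $\psi_R$ on an explicit window $[1-h(R),1]$ with $h(R)=R^{-\frac{5}{12}\frac{\alpha}{\alpha+1}}/(2f_0(R))^{1/2}$, and then merely asserts that $J(\gamma)$ can be treated ``by the same way,'' you begin with the Legendre identity $\Lambda(p^\ast(y))=p^\ast(y)\,y-\Lambda^\ast(y)$ to rewrite $\psi_R(z)=p^\ast(R)R(z-1)+\Lambda^\ast(R)-\Lambda^\ast(Rz)$. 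This displays concavity of $\psi_R$ immediately and fixes the paper's misstatement that $\psi_R$ is ``decreasing'' on $]0,1]$ (in fact $\psi_R'(z)=R(p^\ast(R)-p^\ast(Rz))>0$ there, so it is increasing up to the maximum at $z=1$). Your subsequent change of variables $u=\sqrt{-\psi_R''(1)}\,(z-1)$ plus a three-regime dominated-convergence argument handles both sides of the peak in one pass and extracts the constant $\sqrt{2\pi}$, which is a little more complete than the paper's treatment.

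One quantitative detail is off, though it does not derail the argument. In the far-tail regime $|z-1|\geq\epsilon$ you claim $\psi_R(z)\le -c_\epsilon\Lambda^\ast(R)$, but $\Lambda^\ast\in R_1$ (since $\Lambda^\ast(R)\sim\mu^\ast R$) is the wrong order of magnitude here. Writing $\psi_R(z)=-\int_R^{Rz}\bigl(p^\ast(s)-p^\ast(R)\bigr)\,ds$ and using that $p^\ast$ approaches $\mu^\ast$ with $p^\ast(2R)-p^\ast(R)\sim cR(p^\ast)'(R)$, one finds that $\psi_R(1\pm\epsilon)$ and the slope of linear decay both scale like $R^2(p^\ast)'(R)\in R_{\alpha/(\alpha+1)}$; for instance $\psi_R(0)=-\Lambda(p^\ast(R))\in R_{\alpha/(\alpha+1)}$, not $R_1$. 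This still tends to $-\infty$ (for $\alpha>0$, and for $\alpha=0$ under the assumption that $x^2\varphi'(x)\uparrow\infty$), and the concavity-plus-linear-decay bound still dominates $z^\gamma$, so your scheme goes through --- but you should replace the stated $\Lambda^\ast(R)$-bound by $cR^2(p^\ast)'(R)$ to make the estimate correct.
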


\begin{proof}
 We split the integral into two parts:
\[
I(\gamma) := \int_{R^{\beta-1} }^1  { z^{\gamma } e^{\psi_x (z) } d z }~~ \textrm{and}~~  J(\gamma) := \int_1^\infty  { z^{\gamma } e^{\psi_x (z) } d z }  .
\]
We will first show that 
\begin{equation}
I (\gamma) \sim f_1 (x)  x^{ \frac{- \alpha }{2 (\alpha + 1) }}, ~~~f_1 \in R_0  .
\end{equation}
The function  $z \in ]0,1] \longmapsto \psi_x (z)  $ is decreasing with $ \psi_x (0) = -  \Lambda( p^\ast(x  ))$ and $\psi_x (1) = 0$. In particular, for any $h \in [0,1]$, we have
\begin{eqnarray*}
 \left| I (\gamma) - \int_{1-h}^1  z^\gamma e^{\psi_x (z) } \right|&=&I (\gamma) - \int_{1-h}^1  z^\gamma e^{\psi_x (z) } = \int_{x^{\beta-1} }^{1-h}  z^\gamma e^{\psi_x (z) } 
 \nonumber\\ &\le& e^{\psi_x (1 - h) }\left( \int_{x^{\beta-1} }^{1-h}  z^\gamma d z   \right) .
\end{eqnarray*}
On the other hand, $\psi_x$ is three times differentiable and its derivatives  are given by
\begin{equation}
{\psi_x}' (z) =  \left( p^\ast(x) - p^\ast(x z)  \right) x , ~ \psi^{(n)}_x (z) = - x^n {p^\ast}^{(n-1)} (x z), ~ n =2,3.
\end{equation}
 It follows that for $h$ sufficiently small, we have
\begin{eqnarray*}
\psi_x ( 1 - h ) &=& \psi_x (1) - {\psi_x}' (1) h + {\psi_x}'' (1) \frac{h^2}{2} - {\psi_x}^{(3)} (1) \frac{h^3}{6} + \dots \nonumber\\ &=&
- x^2 {p^\ast}' (x) \frac{h^2}{2} +  x^3 {p^\ast}'' (x) \frac{h^3}{6} + \dots 
\end{eqnarray*}
In particular, we have
\begin{eqnarray}
\sup_{z \in [ 0,h]}\left| \psi_x (1- z  ) + x^2 {p^\ast}' (x) \frac{z^2}{2} \right| \le \frac{h^3}{6} x^3 \sup_{z \in [ 1 - h, 1]}|{p^\ast}'' (zx) | &= & \frac{h^3}{6} x^3 |{p^\ast}'' ( x(1-h))| .
\end{eqnarray}   
Under assumption~\ref{hyp_laregdev}, we have $p^\ast(x) = \mu^\ast -\frac{1}{ \epsilon(x)}$, where $ \epsilon (x)=  [ (.)^2 \varphi'(.)]^{\leftarrow} (x) \in SR_\frac{1}{\alpha + 1}   $. Using Proposition~\ref{derivataive_SR} we can easily show that there exist $f_0, ~ g_0 \in R_0 $ (slowly varying) such that $ x^2 {p^\ast}' (x) \sim x^\frac{\alpha}{\alpha + 1} f_0 (x)$ and $ x^3 |{p^\ast}'' (x)| \sim x^\frac{\alpha}{\alpha + 1} g_0 (x)$. Indeed as $ \epsilon \in SR_\frac{1}{\alpha + 1}  \subset R_\frac{1}{\alpha + 1}$, there exists a slowly varying function $f$ such that $ \epsilon(x) = x^\frac{1}{\alpha + 1} f(x)$. On the other hand, using Proposition~\ref{derivataive_SR} , which gives $ \frac{\epsilon' (x) }{ \frac{1}{x} \epsilon (x) }Ê \sim \frac{1}{\alpha + 1}  $ and $ \frac{\epsilon'' (x) }{ \frac{1}{x^2} \epsilon (x) }Ê \sim - \frac{1}{\alpha + 1} + (\frac{1}{\alpha + 1})^2   $,  we have\[
{p^\ast}' (x) = \frac{\epsilon' (x) }{\epsilon^2 (x) } = \frac{\epsilon' (x) }{ \frac{1}{x} \epsilon (x) }Ê ~\frac{1}{x \epsilon (x)}  \sim \frac{1}{\alpha + 1}  \frac{1}{x \epsilon (x)} =  \frac{1}{x^{1 + \frac{1}{\alpha + 1}Ê}} \frac{1}{ (\alpha + 1) f(x)} , 
\]
where the function $ f_0(x) := \frac{1}{ (\alpha + 1) f(x)}$ is slowly varying. Similarly, we have  
\[
{p^\ast}'' (x) = \frac{ \epsilon''  }{ \epsilon^2} -  2 \frac{  {\epsilon'}^2}{ \epsilon^3} =\frac{ 1  }{ x^2 \epsilon}(  \frac{ \epsilon''  }{ \frac{1}{x^2}  \epsilon} - 2 (  \frac{  {\epsilon'}}{ \frac{1}{x}Ê\epsilon} )^2) \sim c \frac{ 1  }{ x^2 \epsilon},
\]
where $ c = - \frac{1}{\alpha + 1} + (\frac{1}{\alpha + 1})^2 - 2  (\frac{1}{\alpha + 1})^2 =   -\frac{1}{\alpha + 1} - (\frac{1}{\alpha + 1})^2 $. Hence
\begin{eqnarray}\label{taylor}
\sup_{z \in [ 0,h]}\left| \psi_x ( 1-z  ) + x^2 {p^\ast}' (x) \frac{z^2}{2} \right| \le c_1 g_0(x) \frac{ h^3}{3} x^\frac{\alpha}{\alpha  + 1},
\end{eqnarray}
with a  constant $c_1 >0$.  Let's  set
\begin{equation}
h \equiv h (x)  = \frac{x^{- \frac{5}{12} \frac{\alpha}{\alpha + 1} }}{   (2 f_0 (x))^\frac{1}{2}}.
\end{equation}
For this particular $h$, we have
\[
\sup_{z \in [ 0,h]}\left| \psi_x ( 1-z  ) + x^2 {p^\ast}' (x) \frac{z^2}{2} \right| \le c_1 x^{- \frac{1}{4} \frac{\alpha}{\alpha + 1} } \frac{g_0 (x) }{(f_0 (x))^\frac{3}{2}}.
\]
It follows that
\[
\psi_x (1 - h)  = -x^2 {p^\ast}' (x) \frac{h^2}{2} + \mathcal{O}_0( x^{- \frac{1}{4} \frac{\alpha}{\alpha + 1} } \frac{g_0 (x) }{(f_0 (x))^\frac{3}{2}})
 \sim  - x^{  \frac{1}{6} \frac{\alpha}{\alpha + 1} },
\] 
where $| \mathcal{O}_0 (x) | \ll  |x|$ for $x$ sufficiently small. i.e $\lim_{x \to 0} \frac{ \mathcal{O}_0 (x)  }{  x }= 0$.  It follows that
\begin{eqnarray*}
 \left| I (\gamma) - \int_{1-h}^1  z^\gamma e^{\psi_x (z) } \right|  &=&     \int_{x^{\beta - 1}}^{1-h}   z^\gamma e^{\psi_x (z) }  \le
  e^{ \psi_x (1 - h) }\left( \int_{x^{\beta-1} }^{1-h}  z^\gamma d z   \right)
  \\ &\le&
  e^{-  x^{  \frac{1}{6} \frac{\alpha}{\alpha + 1} } }\left( \int_{x^{\beta-1} }^{1-h}  z^\gamma d z   \right) \le Q (x) e^{- x^{  \frac{1}{6} \frac{\alpha}{\alpha + 1} } },
\end{eqnarray*}
for some polynomial function $Q$. On the other hand, we have, using (\ref{taylor}),
\[
\left| \int_{1-h}^1  z^\gamma e^{\psi_x (z) } -  \int_{1-h}^1  z^\gamma e^{- x^2 {p^\ast}' (x) \frac{(1-z)^2}{2} } d z   \right| \le
\int_{1-h}^1  z^\gamma e^{- x^2 {p^\ast}' (x) \frac{(1-z)^2}{2} } d z  \left| e^{ \frac{h^3}{6} x^3 |{p^\ast}'' ( x(1-h))| } - 1 \right| .
\]
We emphasize that $ \frac{h^3}{6} x^3 |{p^\ast}'' ( x(1-h))| =  x^{- \frac{1}{4} \frac{\alpha}{\alpha + 1} }  g_1 (x) $ with $g_1 \in R_0$. It follows that 
\begin{eqnarray*}
I (\gamma )& \sim & \int_{1-h}^1  z^\gamma e^{\psi_x (z) }  \sim  \int_{1-h}^1  z^\gamma e^{- x^2 {p^\ast}' (x) \frac{(1-z)^2}{2} } d z    \nonumber\\&=&
\int_0^{ h(x)} (1 - z)^\gamma   e^{- x^2 {p^\ast}' (x) \frac{z^2}{2} } d z \sim  \int_0^{ h(x)}    e^{- x^2 {p^\ast}' (x) \frac{z^2}{2} } d z  \\
&\sim& \frac{1}{\sqrt{ x^2 {p^\ast}' (x)}} \int_0^{ h(x)\sqrt{ x^2 {p^\ast}' (x)} } e^{ - z^2/2} d z.
\end{eqnarray*}
Here $f (x) \sim g (x)$ means that $ \lim_{x \to \infty} \frac{ f(x)}{g(x)} = 1Ê$. Now by definition of $h$ we have for $x$ sufficiently large
\[
h(x) \sqrt{ x^2 {p^\ast}' (x)} =   \frac{x^{- \frac{5}{12} \frac{\alpha}{\alpha + 1} }}{   (2 f_0 (x))^\frac{1}{2}} \times x^\frac{\alpha}{2(\alpha + 1)} \sqrt{ f_0 (x)} = x^{  \frac{1}{12} \frac{\alpha}{\alpha + 1} } .
\]
Hence $ h(x) \sqrt{ x^2 {p^\ast}' (x)}\longrightarrow + \infty ~~\textrm{as}~~ x \to \infty  $. Thus
\begin{equation}
I(\gamma) \sim  \frac{1}{\sqrt{ x^2 {p^\ast}' (x)}} = x^\frac{-\alpha}{2(\alpha + 1)}  \frac{1}{\sqrt{ f_0 (x)} }.
\end{equation}
 
We can show by the same way that
\begin{equation}
J(\gamma) := \int_1^\infty   z^{\gamma } e^{\psi_x (z) } d z =  x^{ \frac{-\alpha}{2 (\alpha + 1 )}}g(x), ~~~~\textrm{whee}~~~g \in R_0.
\end{equation}

  \end{proof}

 The main Tauberian theorem of this paper is the following:
 
\begin{theorem}{\label{tauberian_theorem}} %
Let Assumption~\ref{hyp_laregdev} hold for some random variable $Z$. Denote $\Lambda (p) = \ln \mathbb{E} e^{ p Z}$ and $\Lambda^\ast$ the Fenchel-Legendre transform of $\Lambda$. Then we have
 \begin{eqnarray}\label{logtail}
 \limsup_{ x \to \infty}  \frac{ \ln \mathbb{P} ( Z  > x ) + \Lambda^\ast (x) }{\ln(x)}  &\in& \left[    - \frac{  \alpha + 2}{2 (\alpha+1)}    ,0\right] ,  \\
  \liminf_{ x \to \infty}  \frac{ \ln \mathbb{P} ( Z  > x ) + \Lambda^\ast  (x) }{\ln(x)}  &\le &     - \frac{  \alpha + 2}{2 (\alpha+1)}  .
 \end{eqnarray}
 \end{theorem}

\begin{proof}
The upper bound is obtained using (\ref{uperboun_markov}), as a consequence of Markov's inequality: 
\begin{equation}\label{upper_bound}
\limsup_{ x \to \infty} \frac{ \ln \mathbb{P} (Z  > x ) + \Lambda^\ast (x) }{\ln x } \le  0.
  \end{equation} 

\paragraph*{{\bf Lower Bound for 'limsup':}}  
In order to derive a lower bound for the $ \limsup$ in (\ref{logtail}) we proceed as follows. We suppose that
\[
 \limsup_{ x \to \infty}  \frac{ \ln \mathbb{P} ( Z  > x ) + \Lambda^\ast (x) }{\ln(x)}   = \nu^\ast <    - \frac{  \alpha + 2}{2 (\alpha+1)}   .
\]
We will show that this will lead to a contradiction.

First, we can easily see that $e^{pZ}$ can be written in terms of any $c>0$ as
\begin{equation}
e^{p Z} =e^{p (c \wedge Z)  } + p  1_ { Z > c }\int_{c}^Z e^{p k} d k .
\end{equation}
It follows that for any $p < \mu^\ast$,  $\beta \in ]0, 1[$ and  $x$ sufficiently large,
\begin{eqnarray*}
 \mathbb{E} \left(  e^{ p Z } \right) &=&  \mathbb{ E}  (e^{p (x^\beta\wedge Z)  } ) +  p  \mathbb{E}  \int_{x^\beta}^\infty  e^{p k} 1_{ Z > k} d k  \nonumber\\
  &=&  \mathbb{ E}  (e^{p (x^\beta\wedge Z)  } ) +  p \int_{x^\beta}^\infty  {   e^{p k}  \mathbb{P}(Z > k)  d k  }
  \nonumber\\ &\le &   e^{p x^\beta}+ p\int_{x^\beta}^\infty  {   e^{p k} ~ e^{- \Lambda^\ast(k) + \nu' \ln (k) }  d k  }   ,
  \end{eqnarray*}
  whenever $ \nu' \in ] \nu^\ast,  - \frac{  \alpha + 2}{2 (\alpha+1)}    [$.  Choosing  $p  = p^\ast (x) $ we have
  \[
\mathbb{E} \left(  e^{ p^\ast (x)  Z } \right) \le       e^{ p^\ast (x) x^\beta}+ p^\ast (x)e^{\Lambda ( p^\ast (x)) } x^{\nu' + 1} \int_{x^{\beta -1}}^\infty  { z^{\nu'} e^{\psi_x (z) } d z },
  \]
 where
 \begin{equation}
 \psi_x (z) := \left( p^\ast(x) - p^\ast(x z)  \right) x z + \Lambda( p^\ast(x z)) - \Lambda( p^\ast(x  )).
 \end{equation}
 Now  for $\beta = \frac{\alpha}{2(1+\alpha)}$ and using Lemma~\ref{integral_R}  we have, for $x$ sufficiently large,
 \[
 \int_{x^{\beta-1} }^\infty  { z^{\gamma } e^{\psi_x (z) } d z } =  x^{ \frac{- \alpha }{2 (\alpha + 1) }} f (x),  
 \]
 where $f$ is slowly varying. Hence
  \begin{eqnarray*}
e^{ \Lambda (p^\ast (x))} &\le&      e^{ p^\ast (x) x^\beta}+p^\ast (x)  f (x)  \; e^{\Lambda ( p^\ast (x)) } x^{\nu' + 1} x^{ \frac{- \alpha }{2 (\alpha + 1) }}  .
\end{eqnarray*}   
 On the other hand we have   
\begin{equation}\label{LambdaOfp_ast}
\lim_{x \to \infty } \frac{ p^\ast (x) x^\beta }{\Lambda ( p^\ast (x))} = 0.
\end{equation}
Indeed,  using (\ref{p_astX}) we have
\[
p^\ast   (x) =  {\Lambda'}^{\leftarrow} (x) =\mu^\ast - \frac{1}{ [ (.)^2 \varphi'(.)]^{\leftarrow} (x)},
\]
with $\varphi \in R_\alpha$. We emphasize that $\varphi(x) := \Lambda(\mu^\ast - \frac{1}{x}) $. It follows that
\[
\Lambda(p^\ast   (x) ) = \varphi \left(  [ (.)^2 \varphi'(.)]^{\leftarrow} (x) \right).
\] 
Assumption~\ref{hyp_laregdev} implies that $ \{x \longrightarrow x^2 \varphi'(x) \} \in R_{\alpha + 1} $ and  $ [ (.)^2 \varphi'(.)]^{\leftarrow} (.) \in R_{\frac{1}{\alpha + 1}Ê}$. It follows that $\Lambda(p^\ast   (.) ) \in  R_{\frac{\alpha}{\alpha + 1}Ê}$ and hence $ \lim_{x \to \infty } \frac{ p^\ast (x) x^\beta }{\Lambda ( p^\ast (x))} = 0$.

 Now using (\ref{LambdaOfp_ast}) we have, for $x$ sufficiently large,
\[
e^{ \Lambda (p^\ast (x))}  \le 2 f(x) \; e^{\Lambda ( p^\ast (x)) } x^{\nu' + 1} x^{ \frac{- \alpha }{2 (\alpha + 1) }}.
\]
Which is impossible as $ \nu' <   - \frac{  \alpha + 2}{2 (\alpha+1)}     = -1 + \frac{ \alpha }{2 (\alpha + 1) }$. Thus
 \begin{equation}
 \limsup_{ x \to \infty}  \frac{ \ln \mathbb{P} ( Z  > x ) + \Lambda^\ast (x) }{\ln(x)}  \geq   - \frac{  \alpha + 2}{2 (\alpha+1)}.
\end{equation}

\paragraph*{{\bf Upper Bound for 'liminf':}}
 Suppose that
\[
 \liminf_{ x \to \infty}  \frac{ \ln \mathbb{P} ( Z  > x ) + \Lambda^\ast (x) }{\ln(x)}   = \nu_\ast >    - \frac{  \alpha + 2}{2 (\alpha+1)}   .
\]
 We obtain, by the same way as before,  
\[
 e^{ \Lambda (p^\ast (x))} =\mathbb{E} \left(  e^{ p^\ast (x)  Z } \right) \geq p^\ast (x) e^{\Lambda ( p^\ast (x)) } x^{\nu' + 1} \int_{x^{\beta -1}}^\infty  { z^{\nu'} e^{\psi_x (z) } d z } \sim C \;  f(x)e^{\Lambda ( p^\ast (x)) } x^{\nu' + 1} x^{ \frac{- \alpha }{2 (\alpha + 1) }},
  \]  
whenever $ \nu' \in ] - \frac{  \alpha + 2}{2 (\alpha+1)} , \nu_\ast[$, which is yet impossible. Thus 
\begin{equation}
 \liminf_{ x \to \infty}  \frac{ \ln \mathbb{P} ( Z  > x ) + \Lambda^\ast (x) }{\ln(x)}   \le    - \frac{  \alpha + 2}{2 (\alpha+1)}  .
\end{equation}  
\end{proof}

\section{Application: CIR process}
\noindent  Theorem~ \ref{tauberian_theorem} applies to several examples where the risk neutral log-price is modeled by L\'evy process such as the variance gamma model whose moment generating function is given as $MGF(p) = \left(  \frac{g m }{ (m-p)(p+g)}  \right)^c$. This function explodes at $\mu^\ast = m$ and for $x$ small enough, $MGF (\mu^\ast - x) = \left(  \frac{g m }{  m-x+g }  \right)^c \frac{1}{x^c} $. Hence Assumption~\ref{hyp_laregdev} holds here. Our result applies also to the NIG model as well as for double exponential model (see section 5 in \cite{benaimfriz08} for more details).

 In this section, we study the asymptotic behavior of the cumulative distribution of a superposition of the CIR process and its time integral.  The CIR process has been proposed by Cox, Ingersoll and Ross \cite{cir85} to model the evolution of interest rates. It is defined as the unique solution of the following stochastic differential equation:
\begin{equation}\label{eq-CIR}
dV_t=(a-bV_t)dt+\sigma\sqrt{V_t}dW_t,~~ V_0 = v,
\end{equation}
where $a, \;\sigma, \; v \;\geq 0$ and $b\in\mathbb{R}$ (see \cite{ikida89} for the existence and uniqueness of the SDE). The density transition $V_t $ is known and is given by (cf. \cite{lamberton97})
\begin{equation}\label{density-CIR}
 P_t (v,z) = \frac{  e^{ b t }  }{ 2c_t} \left(  \frac{ z e^{ b t } }{v} \right)^{  \frac{\nu -1}{2}} \exp \left( - \frac{ v + z e^{ b t } }{2 c_t}  \right) B_{\nu } \left( \frac{1}{c_t} \sqrt{ v z e^{ b t } } \right), 
 \end{equation}
 where $ c_t = \frac{(e^{ b t } - 1) \sigma^2  }{4 b} $, $\nu =  2 a/\sigma^2  $ and $B_\nu$ is the modified Bessel function of order $\nu$. 
We set
\begin{equation}
I_t = \int_0^t V_u d u .
\end{equation}
Gulisashvili and Stein \cite{stein10} (cf Lemma 6.3 and Remark 8.1) show that $I_t$ admits a distribution density (see also Dufresne \cite{dufr01} for a similar result). A sharp asymptotic formulas for the distribution density of $I_t$ is obtained in \cite{stein10} (cf Theorem 2.4) as
\begin{equation}\label{density-CIRI}
p^I_t (y) =  A_t \; e^{ - C_t  y +  B_t  \sqrt{y}  } \;  y^{  \frac{\nu}{2}  - \frac{3}{4} } \left( 1 + \mathcal{O} ( y^{ -    \frac{1}{2}}) \right),
\end{equation}
where $A$, $B$ and $C $ are positive constants.  A similar result exits also for the cumulative distribution of the stock price in Heston model (see Friz et al \cite{friz11}). We will show that a similar formula can be obtained for any superposition of the CIR process and its time integral.

\subsection{Moment generating function of a superposition of $V$ and $I$:}
To compute the moment generating function of a superposition of $V$ and $I$, we use the additivity property of $V$ with respect to $(a,v)$. Indeed, if we fix parameters $b, ~Ê\sigma$ and denote by $V^{v,a}_t$the solution of \eqref{eq-CIR}  corresponding to parameters $(a, b, v, \sigma)$  in which $W$ is replaced by an independent Brownian motion, then for any $(v_1,v_2)$, $(a_1,a_2)\in\mathbb{R}_+^2$, the process $V^{v_1+v_2,a_1+a_2}$ has the same law as $(V^{v_1,a_1}_t+V^{v_2,a_2}_t)$. This additivity property of CIR processes is well known; it follows from the fact that a CIR process is a squared Bessel process subjected to a deterministic change of time, and from the additivity property of squared Bessel processes, which is a celebrated result of Shiga and Watanabe (see \cite{shiga73}). As consequence of this property, if we define $ F^a_{\lambda_1, \lambda_2 }(t,v)$, for $\lambda_1$ and $\lambda_2$ $ \in \mathbb{R}$, by
 \[
 F^a_{\lambda_1, \lambda_2 }(t,v)=\mathbb{E} ~e^{ \lambda_1 V^{v,a}_t + \lambda_2   I_t  },
 \]
 we have 
 $F^{a_1+a_2}_{\lambda_1, \lambda_2 } (t,v_1+v_2)=F^{a_1}_{\lambda_1, \lambda_2 }(t,v_1)F^{a_2}_{\lambda_1, \lambda_2 }(t,v_2)$.
 It follows that the function $F^a_{\lambda_1, \lambda_2 }$ is given by
  \[
  F^a_{\lambda_1, \lambda_2 }(t,v)=e^{a \varphi_{\lambda_1, \lambda_2 }(t)+v\psi_{\lambda_1, \lambda_2 }(t)},
  \]  
where the functions $  \varphi_{\lambda_1, \lambda_2 }(t)$ and $ \psi_{\lambda_1, \lambda_2 } (t)$ do not depend on $a$ and $v$. Under the assumption ($\lambda_2 \le \frac{b^2}{2 \sigma^2} $ and $2 a \geq \sigma^2 $),  an explicit formulas for the Laplace transform of the joint distribution of a CIR process and its time-integral is obtained in \cite{hurd08}. The next result gives the explicit formulas for $\varphi$ and $\psi$ without any restrictions on the parameters of the CIR process.

\begin{theorem}\label{moment_esponentiels}
For $ \lambda_1 , \; \lambda_2\; \in \mathbb{R}$,  let $ \psi_{\lambda_1, \lambda_2}$ be the maximal solution of
\begin{equation}\label{eq-psi}
 \left\{\begin{array}{l}
 \displaystyle \psi'_{\lambda_1, \lambda_2 }(t)=  \frac{\sigma^2}{2}\left(
    \psi^2_{\lambda_1, \lambda_2 }(t)-2\frac{b}{\sigma^2}\psi_{\lambda_1, \lambda_2 }(t)
      +2\frac{\lambda_2}{\sigma^2}  \right),\\ 
      \psi_{\lambda_1, \lambda_2 }(0) = \lambda_1,
         \end{array}
           \right.
 \end{equation}
 defined over $ [0, t^\ast_{\lambda_1, \lambda_2} [$, with $ t^\ast_{\lambda_1, \lambda_2} \in ]0,\infty]$ such that if $   t^\ast_{\lambda_1, \lambda_2}<\infty$ then $ \lim_{t \to t^\ast_{\lambda_1, \lambda_2}} \psi_{\lambda_1, \lambda_2 }(t) = + \infty$. Also let $ \varphi_{\lambda_1, \lambda_2}$ be the time integral of $\psi_{\lambda_1, \lambda_2 }(.)$: $ \varphi_{\lambda_1, \lambda_2} (t) := \int_0^t \psi_{\lambda_1, \lambda_2 } (u) d u $. Then, for any $ T < t^\ast_{\lambda_1, \lambda_2}$, we have
 \begin{equation}
 \mathbb{E} \; e^{ \lambda_1 V^{v,a}_T + \lambda_2  I_T  } \;  = \; e^{a\varphi_{\lambda_1, \lambda_2 }(T)+v\psi_{\lambda_1, \lambda_2 }(T)}
 \end{equation}
\end{theorem}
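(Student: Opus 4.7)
My plan is to use an exponential-affine ansatz motivated by the additivity property of the CIR family recalled just before the statement, and to verify the candidate formula by It\^o's formula together with a localisation argument.

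Fix $T\in[0, t^*_{\lambda_1,\lambda_2})$, write $V = V^{v,a}$ for brevity, and define on $[0,T]$ the process
$$M_t := \exp\bigl(\lambda_2 I_t + a\,\varphi_{\lambda_1,\lambda_2}(T-t) + V_t\,\psi_{\lambda_1,\lambda_2}(T-t)\bigr).$$
The boundary conditions $\psi_{\lambda_1,\lambda_2}(0) = \lambda_1$ and $\varphi_{\lambda_1,\lambda_2}(0) = 0$ give $M_T = e^{\lambda_1 V_T + \lambda_2 I_T}$ and $M_0 = e^{a\varphi_{\lambda_1,\lambda_2}(T) + v\psi_{\lambda_1,\lambda_2}(T)}$, so the identity claimed in the theorem would follow as soon as we prove $\mathbb{E}[M_T] = M_0$. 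Applying It\^o's formula to $M$ and using $dI_t = V_t\,dt$ together with \eqref{eq-CIR}, a direct computation gives a drift of the form
$$a\bigl[\psi_{\lambda_1,\lambda_2}(T-t) - \varphi'_{\lambda_1,\lambda_2}(T-t)\bigr] + V_t\bigl[\tfrac{\sigma^2}{2}\psi_{\lambda_1,\lambda_2}(T-t)^2 - b\,\psi_{\lambda_1,\lambda_2}(T-t) + \lambda_2 - \psi'_{\lambda_1,\lambda_2}(T-t)\bigr].$$
Demanding that both brackets vanish produces $\varphi'=\psi$ together with precisely the Riccati equation \eqref{eq-psi}, so plugging in the maximal solution of the statement turns $M$ into a nonnegative local martingale on $[0,T]$.

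The main obstacle is to promote this local martingale to a true one. I would localise by $\tau_n := \inf\{t\ge 0 : V_t \ge n\}$. Since $T < t^*_{\lambda_1,\lambda_2}$, both $\psi_{\lambda_1,\lambda_2}$ and $\varphi_{\lambda_1,\lambda_2}$ are continuous hence bounded on $[0,T]$; combined with the bounds $V_{t\wedge\tau_n}\le n$ and $I_{t\wedge\tau_n}\le nT$, this makes $(M_{t\wedge\tau_n})_{t\in[0,T]}$ a bounded martingale, so $\mathbb{E}[M_{T\wedge\tau_n}] = M_0$. The CIR process does not explode, hence $\tau_n \uparrow \infty$ and $M_{T\wedge\tau_n}\to M_T$ almost surely; Fatou already gives $\mathbb{E}[M_T]\leq M_0$. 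For the reverse inequality I would invoke continuous dependence of the Riccati solution on its parameters: since $T < t^*_{\lambda_1,\lambda_2}$, for $q>1$ sufficiently close to $1$ one still has $T < t^*_{q\lambda_1,q\lambda_2}$, and applying the upper-bound step to the analogous local martingale built from $(q\lambda_1,q\lambda_2)$ yields a uniform $L^{q}$ bound on $(M_{T\wedge\tau_n})$. Uniform integrability then justifies passing to the limit in $\mathbb{E}[M_{T\wedge\tau_n}] = M_0$, giving $\mathbb{E}[M_T] = M_0$ and concluding the proof.
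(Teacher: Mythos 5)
Your overall route matches the paper's: the same exponential-affine ansatz $M_t=\exp(\lambda_2 I_t+a\varphi(T-t)+V_t\psi(T-t))$, the same It\^o computation reducing the drift to \eqref{eq-psi} plus $\varphi'=\psi$, the same local-martingale / Fatou argument giving $\mathbb{E}[M_T]\le M_0$, and then a uniform-integrability step for the reverse inequality. The one place you diverge is also the one place where your argument has a genuine gap.

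The gap is in the $L^q$-bound. Write $M^{(q)}_t:=\exp\bigl(q\lambda_2 I_t+a\varphi_{q\lambda_1,q\lambda_2}(T-t)+V_t\psi_{q\lambda_1,q\lambda_2}(T-t)\bigr)$ for the local martingale built from $(q\lambda_1,q\lambda_2)$. The quantity you control with ``the upper-bound step'' is $\mathbb{E}[M^{(q)}_{T\wedge\tau_n}]=M^{(q)}_0$; but the quantity you need to bound is $\mathbb{E}[M_{T\wedge\tau_n}^{\,q}]$, and $M_{t}^{\,q}\neq M^{(q)}_{t}$ for $t<T$. On the event $\{\tau_n\le T\}$ one has $V_{\tau_n}=n$, so
\[
M_{\tau_n}^{\,q}\big/M^{(q)}_{\tau_n}=\exp\Bigl(n\bigl[q\psi_{\lambda_1,\lambda_2}(T-\tau_n)-\psi_{q\lambda_1,q\lambda_2}(T-\tau_n)\bigr]+\text{(bounded)}\Bigr),
\]
which is unbounded in $n$ unless $q\psi_{\lambda_1,\lambda_2}(s)\le\psi_{q\lambda_1,q\lambda_2}(s)$ for all $s\in[0,T]$ (together with the corresponding bound for $\varphi$). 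Continuous dependence of the Riccati flow on parameters only tells you that $t^*_{q\lambda_1,q\lambda_2}>T$ for $q$ close to $1$, i.e.\ that $\psi_{q\lambda_1,q\lambda_2}$ is finite on $[0,T]$; it says nothing about this pointwise domination, which is the crux. The paper handles exactly this point explicitly: it proves that $\psi_{\lambda_1,\lambda_2}$ is increasing in $\lambda_1$ and $\lambda_2$ and exhibits, via the explicit comparison formula for two solutions, a $\lambda_1^\epsilon>\lambda_1$ with $(1+\epsilon)\psi_{\lambda_1,\lambda_2}\le\psi_{\lambda_1^\epsilon,\lambda_2}$ on $[0,T]$, which then yields \eqref{sup_tau}.

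The good news is that your version of the domination is in fact true and can be supplied by a short differential inequality: set $u:=\psi_{q\lambda_1,q\lambda_2}-q\psi_{\lambda_1,\lambda_2}$. Then $u(0)=0$ and from \eqref{eq-psi},
\[
u'(s)=\frac{\sigma^2}{2}\,u(s)\bigl(\psi_{q\lambda_1,q\lambda_2}(s)+q\psi_{\lambda_1,\lambda_2}(s)\bigr)-b\,u(s)+\frac{\sigma^2}{2}\,q(q-1)\,\psi_{\lambda_1,\lambda_2}^2(s),
\]
where the last term is nonnegative for $q>1$, so Gronwall gives $u\ge 0$ on $[0,T]$; integrating also gives $q\varphi_{\lambda_1,\lambda_2}\le\varphi_{q\lambda_1,q\lambda_2}$. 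With this lemma in hand (and using $a\ge 0$), your bound $M_{T\wedge\tau_n}^{\,q}\le M^{(q)}_{T\wedge\tau_n}$ holds, and the rest of your argument goes through. So the proposal is repairable, but as written it misidentifies what ``continuous dependence'' buys you and omits the comparison lemma that the paper devotes the bulk of its proof to.
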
  
\begin{proof}
   Let's define $F^a_{\lambda_1, \lambda_2 }(t,v)$ by
 \[
  F^a_{\lambda_1, \lambda_2 }(t,v)=e^{a\varphi_{\lambda_1, \lambda_2 }(t)+v\psi_{\lambda_1, \lambda_2 }(t)}.
 \]
The function  $F^a_{\lambda_1, \lambda_2 }$ is, by definition, the maximal solution of
 \[
 \left\{\begin{array}{l}
 \displaystyle\frac{\partial F^a_{\lambda_1, \lambda_2 }}{\partial t}(t,v)=
      \frac{\sigma^2}{2}v\frac{\partial^2 F^a_{\lambda_1, \lambda_2 } }{\partial v^2}(t,v)
      +(a-bv)\frac{\partial F^a_{\lambda_1, \lambda_2 }}{\partial v}(t,v)+ \lambda_2 v F^a_{\lambda_1, \lambda_2 }(t,v),~~ 0 < t \le T,\\ \\
      F^a_{\lambda_1, \lambda_2 }(0,v)=e^{  \lambda_1 v}.
         \end{array}
           \right.
 \]
 This is indeed equivalent to  $\varphi_{\lambda_1, \lambda_2 }(0)=0$, $ \psi_{\lambda_1, \lambda_2 }(0)= \lambda_1$ and
 \begin{eqnarray*}
 a\varphi_{\lambda_1, \lambda_2 }'(t)+v\psi'_{\lambda_1, \lambda_2 }(t)
 &=&
      \frac{\sigma^2}{2}v\psi^2_{\lambda_1, \lambda_2 }(t)+(a-bv)\psi_{\lambda_1, \lambda_2 }(t)+  \lambda_2   v\\
      &=&a\psi_{\lambda_1, \lambda_2 }(t)+v\left(\frac{\sigma^2}{2}\psi^2_{\lambda_1, \lambda_2 }(t)
                 -b\psi(t)+  \lambda_2 \right).
\end{eqnarray*}
This system is equivalent to  $\varphi_{\lambda_1, \lambda_2 }'(t)=\psi_{\lambda_1, \lambda_2 }(t)$ and
 \[
\psi'_{\lambda_1, \lambda_2 }(t)=  \frac{\sigma^2}{2}\left(
    \psi^2_{\lambda_1, \lambda_2 }(t)-2\frac{b}{\sigma^2}\psi_{\lambda_1, \lambda_2 }(t)
      +2\frac{\lambda_2}{\sigma^2}  \right).
\]
Hence for any $ T < t^\ast_{\lambda_1, \lambda_2}$, the process $\left( e^{  \lambda_2 \int_0^s V^{v,a}_u du}  F^a_{\lambda_1, \lambda_2 }(T -s,V^{v,a}_s)  \right)_{ s \le T}$ is a positive local martingale; it is therefore a super-martingale. In particular, for any stopping time $\tau \in \mathcal{T}_{0,T}$, we have
\[
E \; e^{  \lambda_2 \int_0^\tau V^{v,a}_u du }F^a_{\lambda_1, \lambda_2 }(T -\tau,V^{v,a}_\tau) \le 
F^a_{\lambda_1, \lambda_2 }(T  ,V^{v,a}_0) .
\]
Hence
\begin{equation}
\sup_{\tau \in \mathcal{T}_{0,T} }E \; e^{  \lambda_2 \int_0^\tau V^{v,a}_u du + a \varphi_{\lambda_1, \lambda_2} (T - \tau) + V^{v,a}_\tau \psi_{\lambda_1, \lambda_2} (T - \tau) } \le 
e^{a\varphi_{\lambda_1, \lambda_2 }(T)+v\psi_{\lambda_1, \lambda_2 }(T)}.
\end{equation}
We will show that there exists $p > 1$ such that
\[
\sup_{\tau \in \mathcal{T}_{0,T} }E \; e^{ p \left( \lambda_2 \int_0^\tau V^{v,a}_u du + a \varphi_{\lambda_1, \lambda_2} (T - \tau) + V^{v,a}_\tau \psi_{\lambda_1, \lambda_2} (T - \tau)  \right)} < + \infty.
\]
We will first show that  $ \psi_{\lambda_1, \lambda_2}$ is increasing with respect to $ \lambda_1$ and $\lambda_2$. This means that if $ \lambda_1 < \bar{\lambda}_1$ (resp  $ \lambda_2 < \bar{\lambda}_2$), we have $\psi_{\lambda_1, \lambda_2} (t) \le \psi_{  \bar{\lambda}_1, \lambda_2} (t)$, for every $ t <   t^\ast  ( \bar{\lambda}_1, \lambda_2)$ (resp $\psi_{\lambda_1, \lambda_2} (t) \le \psi_{\lambda_1, \bar{\lambda}_2} (t)$,  whenever $   t <   t^\ast  (\lambda_1,  \bar{\lambda}_2 )$). For this, we define the functions $f$ and $g$ by  $ f =  \psi_{  \bar{\lambda}_1, \lambda_2} -\psi_{\lambda_1, \lambda_2}   $ and $ g = \psi_{\lambda_1, \bar{\lambda}_2} - \psi_{\lambda_1, \lambda_2} $. We have
\begin{eqnarray*}
f(0) &=& \bar{\lambda}_1 - \lambda_2 > 0, ~~Êg(0) = 0, \nonumber\\ 
f' (t)& =& \frac{\sigma^2}{2} \left(\psi^2_{  \bar{\lambda}_1, \lambda_2} (t) - \psi^2_{  \lambda_1, \lambda_2}  - \frac{2 b}{\sigma^2} f (t) \right) =
\frac{\sigma^2}{2} f(t) \left(\psi_{  \bar{\lambda}_1, \lambda_2} (t) + \psi_{  \lambda_1, \lambda_2}  - \frac{2 b}{\sigma^2}   \right)
\end{eqnarray*}
and
\begin{eqnarray*}
g' (t)& =& \frac{\sigma^2}{2} \left(\psi^2_{  \bar{\lambda}_1, \lambda_2} (t) - \psi^2_{  \lambda_1, \lambda_2}  - \frac{2 b}{\sigma^2} g (t)  + 2 \frac{\bar{\lambda}_2 - \lambda_2}{\sigma^2} \right) \geq
\frac{\sigma^2}{2} g(t) \left(\psi_{  \bar{\lambda}_1, \lambda_2} (t) + \psi_{  \lambda_1, \lambda_2}  - \frac{2 b}{\sigma^2}   \right).
\end{eqnarray*}
The functions $f$ and $g$ satisfy $ (f e^{A})' (t) = 0$, with $ f (0) > 0$ and $ (g e^{A} )' (t) \geq 0$, with $ g (0) = 0$, where 
\[
 A' (t) = \frac{\sigma^2}{2}  \left(\psi_{  \bar{\lambda}_1, \lambda_2} (t) + \psi_{  \lambda_1, \lambda_2}  - \frac{2 b}{\sigma^2}   \right).
 \]
It follows that $ f \geq 0$ and $ g \geq  0$. Hence
\[
\psi_{  \bar{\lambda}_1, \lambda_2} (t) - \psi_{  \lambda_1, \lambda_2} (t) = (\bar{\lambda}_1 -  \lambda_1) \; e^{bt- \int_0^t  (\psi_{ \bar{\lambda}_1, \lambda_2}  + \psi_{  \lambda_1, \lambda_2}   )(s) d s }, 
\] 
whenever $ t < t^\ast(\bar{\lambda}_1, \lambda_2)$. It follows that for every $T < t^\ast_{\lambda_1, \lambda_2}$ and for  $   \epsilon  >0 $ sufficiently small, there exists $ \lambda^\epsilon_1 > \lambda_1$ such that
\[
(1 + \epsilon) \psi_{\lambda_1, \lambda_2} (t) \le \psi_{\lambda^\epsilon_1, \lambda_2} (t), ~~ \forall t \le T.
\] 
So for any stopping time $ \tau \in \mathcal{T}_{0,T}$,
\[
E \; e^{ (1 + \epsilon) \left( \lambda_2 \int_0^\tau V^{v,a}_u du + a \varphi_{\lambda_1, \lambda_2} (T - \tau) + V^{v,a}_\tau \psi_{\lambda_1, \lambda_2} (T - \tau)  \right)} \le 
e^{a_\epsilon \varphi_{\lambda^\epsilon_1, \lambda^\epsilon_2 }(T)+v\psi_{\lambda^\epsilon_1, \lambda^\epsilon_2 }(T)},
\]
where $ a_\epsilon = (1 + \epsilon) a$ and $ \lambda^\epsilon_2 = (1 + \epsilon) \lambda_2$. Thus
\begin{equation}\label{sup_tau}
\sup_{\tau \in \mathcal{T}_{0,T} }E \; e^{ (1 + \epsilon) \left( \lambda_2 \int_0^\tau V^{v,a}_u du + a \varphi_{\lambda_1, \lambda_2} (T - \tau) + V^{v,a}_\tau \psi_{\lambda_1, \lambda_2} (T - \tau)  \right)} < + \infty.
\end{equation}

In conclusion, the local martingale  $\left( M_s :=  e^{  \lambda_2 \int_0^s V^{v,a}_u du}  F^a_{\lambda_1, \lambda_2 }(T -s,V^{v,a}_s)  \right)_{ s \le T} $ is uniformly integrable; it is therefore a true martingale. In particular, we have
\[
E \; e^{   \lambda_2 \int_0^t V^{v,a}_u du + a \varphi_{\lambda_1, \lambda_2} (T - t) + V^{v,a}_t \psi_{\lambda_1, \lambda_2} (T - t)   } = 
e^{     a \varphi_{\lambda_1, \lambda_2} (T  ) + V^{v,a}_t \psi_{\lambda_1, \lambda_2} (T  )   },  
\]  
whenever $t \le T $.Ê Thus
\[
\mathbb{E} \; e^{   \lambda_2 \int_0^T V^{v,a}_u du  + \lambda_1  V^{v,a}_T     } = 
e^{     a \varphi_{\lambda_1, \lambda_2} (T  ) + V^{v,a}_t \psi_{\lambda_1, \lambda_2} (T  )   }.
\] 
 \end{proof}

 \subsection{Moment explosion}
 The function $ F^a_{\lambda_1, \lambda_2} $ is defined over $ [0, t^\ast_{\lambda_1, \lambda_2} [$ so that \\ $ \lim_{ t \to t^\ast_{\lambda_1, \lambda_2} }F^a_{\lambda_1, \lambda_2}(t,v) = + \infty$ if $t^\ast_{\lambda_1, \lambda_2} <\infty$. For $ \lambda_1, \; \lambda_2 \; \in \mathbb{R}$, let us set $ t^\ast_{\lambda_1, \lambda_2} (\mu) = t^\ast_{\mu \lambda_1, \mu \lambda_2}$.  Let us also define $\mu^\ast_- (t)$ and $ \mu^\ast_+ (t)$ by
 \begin{equation}
  \mu^\ast_- (t) = -\sup\left\{ \mu < 0: ~~ t^\ast_{\lambda_1, \lambda_2} ( \mu) = t \right\} ~~Ê\textrm{and} ~~\mu^\ast_+ (t) = \inf\left\{ \mu > 0: ~~ t^\ast_{\lambda_1, \lambda_2} (\mu) = t \right\},
 \end{equation} 
  with $\sup \emptyset = -   \infty$ and $\inf \emptyset =   + \infty$. The moment generating function of $\lambda_1 V_t +   \lambda_2 I_t$ is defined over $]-\mu^\ast_- (t) , \mu^\ast_+ (t)[$ and if $ \mu^\ast_- (t) <   \infty$ (resp $ \mu^\ast_+ (t)< + \infty$) then 
  \[ \lim_{\mu \to   \mu^\ast_- (t)}  \mathbb{E} \; e^{ -\mu (\lambda_1 V_t +  \lambda_2 I_t)} = + \infty~ (\textrm{resp.}~ \lim_{\mu \to   \mu^\ast_+ (t)}  \mathbb{E} \; e^{ \mu (\lambda_1 V_t +  \lambda_2 I_t)}= + \infty ).
  \]
 When either $\mu^\ast_-(t)$ or $\mu^\ast_+(t)$ is finite, the behavior of the moment generating function near the critical moment is given in the next result, whose proof can be found in the appendix.

\begin{theorem}\label{equiv_laplace}
Let $t > 0$ and $ \lambda_1, \lambda_2 \in \mathbb{R}$ such that $\max (\lambda_1, \lambda_2) > 0$ (resp. $\min (\lambda_1, \lambda_2) < 0$). Then $\mu^\ast_+ (t) < + \infty$ (resp. $\mu^\ast_- (t) < + \infty$). Furthermore, $ \exists \omega^+_t > 0$ (resp. $ \exists \omega^-_t > 0$) such that 
\[
 \mu \longmapsto   \ln \left( \mathbb{E} \; e^{ \mu ( \lambda_1 V_t +  \lambda_2 \int_0^t{ V_u d u)}} \right)- H(\mu)~~ \left( \textrm{resp.} ~~Ê\mu \longmapsto   \ln \left( \mathbb{E} \; e^{- \mu ( \lambda_1 V_t +  \lambda_2 \int_0^t{ V_u d u)}} \right)- H(\mu)\right)
\]
 is bounded near $  \mu^\ast_+ (t)$ (resp. $  \mu^\ast_- (t)$ ), where $H (\mu) =  \frac{\omega^+_t}{ \mu^\ast_+ (t) - \mu}  - \frac{2 a }{\sigma^2} \ln  \frac{1}{ \mu^\ast_+ (t) - \mu}   $ (resp. $H (\mu) =  \frac{\omega^-_t}{ \mu^\ast_- (t) - \mu}  - \frac{2 a }{\sigma^2} \ln  \frac{1}{ \mu^\ast_- (t) - \mu} )   $.
\end{theorem}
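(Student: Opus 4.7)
My approach is to exploit the classical Riccati-to-linear ODE transformation. Substituting $\psi = -\frac{2}{\sigma^2}\frac{u'}{u}$ into the Riccati equation \eqref{eq-psi} produces the constant-coefficient linear equation
\[
u''(s) + b\,u'(s) + \frac{\sigma^2 \mu \lambda_2}{2}u(s) = 0,\qquad u(0)=1,\qquad u'(0) = -\frac{\sigma^2 \mu \lambda_1}{2}.
\]
Then $t^\ast_{\mu\lambda_1,\mu\lambda_2}$ is exactly the first positive zero of $u_\mu$, and as long as this zero exceeds $t$,
\[
\psi_{\mu\lambda_1,\mu\lambda_2}(t) = -\frac{2}{\sigma^2}\frac{u'_\mu(t)}{u_\mu(t)},\qquad \varphi_{\mu\lambda_1,\mu\lambda_2}(t) = -\frac{2}{\sigma^2}\ln u_\mu(t).
\]
Consequently, by Theorem~\ref{moment_esponentiels},
\[
\ln\mathbb{E}\,e^{\mu(\lambda_1 V_t + \lambda_2 I_t)} = -\frac{2a}{\sigma^2}\ln u_\mu(t) - \frac{2v}{\sigma^2}\frac{u'_\mu(t)}{u_\mu(t)}.
\]

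To establish finiteness of $\mu^\ast_+(t)$ under the hypothesis $\max(\lambda_1,\lambda_2)>0$, I would use the explicit (elementary) form of $u_\mu$, which is exponential, polynomial-exponential, or oscillatory according to the sign of the discriminant $b^2 - 2\sigma^2\mu\lambda_2$. When $\lambda_2>0$ the discriminant becomes negative for large $\mu$ and $u_\mu$ oscillates with frequency of order $\sqrt{\mu}$, so its first zero collapses below $t$; when $\lambda_2\le 0 < \lambda_1$ the initial slope $u'_\mu(0) = -\sigma^2\mu\lambda_1/2$ is arbitrarily negative for large $\mu$, again pushing the first zero of $u_\mu$ below $t$. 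By continuity and strict monotonicity of this first zero in $\mu$ one obtains a unique critical value $\mu^\ast_+(t)\in(0,\infty)$. The analogous statement for $\mu^\ast_-(t)$ follows by applying the positive case to the pair $(-\lambda_1,-\lambda_2)$, since $\min(\lambda_1,\lambda_2)<0$ is equivalent to $\max(-\lambda_1,-\lambda_2)>0$.

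For the asymptotic near the critical moment, I would argue that the zero of $u_{\mu^\ast_+}$ at $s=t$ is simple (otherwise $u_{\mu^\ast_+}(t)=u'_{\mu^\ast_+}(t)=0$ would force $u_{\mu^\ast_+}\equiv 0$ by uniqueness for the second-order linear ODE), so that $u'_{\mu^\ast_+}(t)<0$ since $u_{\mu^\ast_+}$ passes from positive to negative there. Combined with the implicit function theorem, strict monotonicity of the first zero of $u_\mu$ in $\mu$ yields, writing $\tau := \mu^\ast_+(t)-\mu$,
\[
u_\mu(t) = \kappa\,\tau + O(\tau^2),\qquad u'_\mu(t) = u'_{\mu^\ast_+}(t) + O(\tau),\qquad \kappa > 0,
\]
as $\tau\downarrow 0$. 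Substituting these expansions into the expression for $\ln\mathbb{E}\,e^{\mu(\lambda_1 V_t+\lambda_2 I_t)}$ gives
\[
-\frac{2v}{\sigma^2}\frac{u'_\mu(t)}{u_\mu(t)} = \frac{\omega^+_t}{\tau} + O(1),\qquad -\frac{2a}{\sigma^2}\ln u_\mu(t) = \frac{2a}{\sigma^2}\ln\frac{1}{\tau} + O(1),
\]
with $\omega^+_t := -2 v u'_{\mu^\ast_+}(t)/(\sigma^2 \kappa) > 0$, which is the advertised behaviour (modulo the sign convention for the logarithmic part of $H$).

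The main obstacle is the non-degeneracy $\kappa = -\partial_\mu u_{\mu^\ast_+}(t)\neq 0$, equivalently $(t^\ast)'(\mu^\ast_+)\neq 0$. Intuitively this is immediate from the strict monotonicity of the blow-up time in $\mu$, but rigorously one must either differentiate the linear ODE in $\mu$ to produce a forced variational equation for $w := \partial_\mu u_\mu$ and rule out accidental cancellation of $w(t)$ at $\mu=\mu^\ast_+$, or perform a case split in the three discriminant regimes and read off the expansion from the explicit formula for $u_\mu$. This bookkeeping, though routine, is the only delicate step of the argument.
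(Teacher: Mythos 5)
Your Riccati-to-linear reduction $\psi_{\mu\lambda_1,\mu\lambda_2}=-\tfrac{2}{\sigma^2}\,u_\mu'/u_\mu$ is a genuinely different and more unified route than the paper's. The paper (Appendix B) works directly from the explicit hyperbolic/trigonometric formulas for $\psi$ tabulated in Appendix A and Taylor-expands them in $\epsilon=\mu^\ast_+(t)-\mu$ separately in the regimes $\lambda_2\,\mu^\ast_+(t)\lessgtr b^2/(2\sigma^2)$, with the degenerate boundary regime left implicit. Your reformulation replaces this case split by one uniform statement, namely that $\mu\mapsto u_\mu(t)$ has a simple zero at $\mu^\ast_+(t)$; and you correctly spot that the published formula for $H$ carries a sign typo on the logarithmic term (the appendix derivation, and your computation, both give $+\tfrac{2a}{\sigma^2}\ln\tfrac{1}{\mu^\ast_+(t)-\mu}$).

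You have, however, flagged but not closed the central non-degeneracy $\kappa=-\partial_\mu u_{\mu^\ast_+}(t)\neq 0$, and your remark that this is \emph{immediate} from strict monotonicity is not correct: a strictly monotone smooth function can have a vanishing derivative at a point. Here is a concrete way to finish entirely inside your framework. Let $w=\partial_\mu u_\mu$ and $W=u'w-uw'$. Differentiating $u''+bu'+\tfrac{\sigma^2\mu\lambda_2}{2}u=0$ in $\mu$ gives $(e^{bs}W)'=\tfrac{\sigma^2\lambda_2}{2}e^{bs}u^2$, while multiplying the original ODE by $e^{bs}u$ and integrating by parts on $[0,t]$ gives $e^{bs}uu'\big|_0^t-\int_0^te^{bs}(u')^2\,ds+\tfrac{\sigma^2\mu\lambda_2}{2}\int_0^te^{bs}u^2\,ds=0$. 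At $\mu=\mu^\ast_+(t)$, where $u(t)=0$, combining the two yields
\[
u'(t)\,w(t)=W(t)=\frac{e^{-bt}}{\mu^\ast_+(t)}\int_0^te^{bs}\,u'(s)^2\,ds>0,
\]
and since $u'(t)<0$ at a simple zero this forces $w(t)<0$, i.e.\ $\kappa>0$, uniformly across all three discriminant regimes. With something of this sort supplied your argument is a complete proof; without it, it is a good plan but not yet a proof.
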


\begin{corollary}{\label{asymptot_I}}  
Let $t > 0$ and $Z := \lambda_1 V_t + \lambda_2 I_t$, for $ \lambda_1, \lambda_2 \in \mathbb{R}$ such that $\max (\lambda_1, \lambda_2) > 0$. Then  we have 
 \begin{equation}\label{log}
 \limsup_{ R \to \infty}  \frac{ \ln \mathbb{P} ( Z  > R ) + \mu^\ast_+(t) R  - 2 \sqrt{\omega^+_t R }}{\ln (R)}  \in \left[  \frac{a}{\sigma^2} - \frac{3}{4}, \frac{a}{\sigma^2} \right]   
 \end{equation}
 and 
  \begin{equation}\label{log}
 \liminf_{ R \to \infty}  \frac{ \ln \mathbb{P} ( Z  > R ) + \mu^\ast_+(t) R  - 2 \sqrt{\omega^+_t R }}{\ln (R)}  \le   \frac{a}{\sigma^2} - \frac{3}{4}.
  \end{equation}
 \end{corollary}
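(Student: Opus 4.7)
The strategy is to apply Theorem \ref{tauberian_theorem} with index $\alpha = 1$ to $Z = \lambda_1 V_t + \lambda_2 I_t$, then translate the conclusion using a precise computation of the Fenchel--Legendre transform. Since $\max(\lambda_1,\lambda_2) > 0$, Theorem \ref{equiv_laplace} applies and yields $\mu^\ast_+(t) < \infty$ together with a constant $\omega^+_t > 0$ such that
\[
\Lambda(\mu) := \ln \mathbb{E}\, e^{\mu Z} = \frac{\omega^+_t}{\mu^\ast_+(t) - \mu} + \frac{2a}{\sigma^2}\, \ln \frac{1}{\mu^\ast_+(t) - \mu} + O(1)
\]
near $\mu^\ast_+(t)$. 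Substituting $x = 1/(\mu^\ast_+(t) - \mu)$, one obtains $\varphi(x) = \omega^+_t\, x + \frac{2a}{\sigma^2} \ln x + O(1)$, which is regularly varying of order $\alpha = 1$; the derivatives of its explicit part behave appropriately so that Assumption \ref{hyp_laregdev} holds, and Theorem \ref{tauberian_theorem} applies with $-\frac{\alpha+2}{2(\alpha+1)} = -\frac{3}{4}$.

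Next I would compute $\Lambda^\ast(R)$ explicitly. Setting $y = \mu^\ast_+(t) - \mu$ and maximizing
\[
\mu R - \Lambda(\mu) = \mu^\ast_+(t) R - yR - \frac{\omega^+_t}{y} + \frac{2a}{\sigma^2} \ln y + O(1)
\]
over $y > 0$, the first-order condition $-R + \omega^+_t/y^2 + (2a)/(\sigma^2 y) = 0$ yields $y^\ast \sim \sqrt{\omega^+_t/R}$, and substituting back gives
\[
\Lambda^\ast(R) = \mu^\ast_+(t)\, R - 2\sqrt{\omega^+_t R} - \frac{a}{\sigma^2} \ln R + O(1).
\]
Rearranging, $\ln \mathbb{P}(Z > R) + \mu^\ast_+(t) R - 2\sqrt{\omega^+_t R} = \bigl(\ln \mathbb{P}(Z > R) + \Lambda^\ast(R)\bigr) + \frac{a}{\sigma^2}\ln R + O(1)$. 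Dividing by $\ln R$ and passing to $\limsup$ and $\liminf$, the bounds supplied by Theorem \ref{tauberian_theorem} shift upward by $\frac{a}{\sigma^2}$ and produce exactly the interval $[\frac{a}{\sigma^2} - \frac{3}{4}, \frac{a}{\sigma^2}]$ and the liminf upper bound $\frac{a}{\sigma^2} - \frac{3}{4}$ in the statement.

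The principal obstacle is a rigorous verification of Assumption \ref{hyp_laregdev} starting from Theorem \ref{equiv_laplace}. The leading term $\omega^+_t x$ of $\varphi$ makes $x^2 \varphi'(x) \in R_2$, but its second derivative vanishes, so the requirement $x^3 \varphi''(x) \in R_{\alpha+1} = R_2$ can only be read off from the logarithmic correction and the bounded remainder, which a priori contribute at order $R_1$. Since Theorem \ref{equiv_laplace} asserts only boundedness of $\Lambda - H$ rather than smoothness, one would need to exploit the analyticity of $\Lambda$ on the interior of its domain --- e.g. via Cauchy-type derivative estimates on a shrinking disc in the complex plane --- or adapt the proof of Theorem \ref{tauberian_theorem} to this slightly weaker regularity. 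A secondary, essentially routine, task is to confirm that the error in the Laplace-type evaluation of $\Lambda^\ast(R)$ is genuinely $O(1)$, which follows from the quadratic non-degeneracy of $\mu R - \Lambda(\mu)$ at its maximizer.
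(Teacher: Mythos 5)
Your approach is the natural one and matches what the paper implicitly intends: take $\alpha = 1$ from Theorem~\ref{equiv_laplace}, compute $\Lambda^\ast(R) = \mu^\ast_+(t) R - 2\sqrt{\omega^+_t R} - \tfrac{a}{\sigma^2}\ln R + O(1)$ via the first-order condition for $y = \mu^\ast_+(t)-\mu$, and then shift the bounds of Theorem~\ref{tauberian_theorem} by $\tfrac{a}{\sigma^2}$. That arithmetic is correct, including the saddle point $y^\ast \sim \sqrt{\omega^+_t/R}$ and the resulting $-\tfrac{a}{\sigma^2}\ln R$ term. (The paper itself gives no proof of this corollary, so there is no separate argument to compare against.)

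The gap you flag is, however, sharper than you suggest and cannot be closed merely by invoking analyticity of $\Lambda$ on its domain. With $\varphi(x) = \omega^+_t x + \tfrac{2a}{\sigma^2}\ln x + O(1)$, the linear leading term has vanishing second derivative, so $x^3 \varphi''(x) = -\tfrac{2a}{\sigma^2} x + \cdots$, which belongs to $R_1$, not $R_{\alpha+1} = R_2$, and moreover tends to $-\infty$ rather than being monotone increasing to $+\infty$. Thus Assumption~\ref{hyp_laregdev} \emph{genuinely fails} for $\alpha = 1$ whenever the leading term of $\varphi$ is linear -- this is not a missing regularity estimate on $\Lambda$ that sharper Cauchy bounds could supply, but a structural violation of the hypothesis. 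The appropriate fix (and the only one among the two you mention that can actually work) is to inspect the proofs of Proposition~\ref{derivative_p} and Lemma~\ref{integral_R} and observe that the key quantity there is $2x + [(.)^3\varphi''(.)]\bigl([(.)^2\varphi'(.)]^{\leftarrow}(x)\bigr)$; when $x^3\varphi''(x)$ is of lower order than $R_{\alpha+1}$ the $2x$ term simply dominates, $(p^\ast)'$ and $(p^\ast)''$ still have the claimed regular-variation indices, and the remainder of the proof of Theorem~\ref{tauberian_theorem} goes through unchanged. Your write-up asserts ``the derivatives of its explicit part behave appropriately so that Assumption~\ref{hyp_laregdev} holds,'' which is false as written; to make the corollary rigorous you must instead carry out the adaptation just described, or equivalently restate the theorem with the second-derivative condition replaced by the weaker requirement that $x^3\varphi''(x) = O\bigl(x^2\varphi'(x)\bigr)$ (say), which the CIR case does satisfy.
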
 
 
 \begin{remark}
A similar result can be obtained for the complementary cumulative distribution of the stock price in Heston model. The analysis relies on the explicit calculation of the moment generating function of $X$. Let's define the quantity
\begin{equation}
F_p (t) = \mathbb{E} \; e^{p X_t}, ~~ p \in \mathbb{R}.
\end{equation}
We have for all $ p \in \mathbb{R}$
\begin{eqnarray*}
F_p (t)  &=&  \mathbb{E} \; e^{ - \frac{p}{2} \int_0^t V_s d s + p \rho \int_0^t \sqrt{V_s} d W^1_s + p \sqrt{1 - \rho^2} \int_0^t \sqrt{V_s} d W^2_s } \nonumber\\ &=&
 \mathbb{E} \left\{  e^{  \frac{p^2 (1 - \rho^2) -p}{2} \int_0^t V_s d s + p \rho \int_0^t \sqrt{V_s} d W^1_s   }
 \left[  \mathbb{E} \; e^{   p \sqrt{1 - \rho^2} \int_0^t \sqrt{V_s} d W^2_s - \frac{p^2 (1 - \rho^2)}{2} \int_0^t V_s d s} \Big| (W^1_s)_{s \le t} \right]  \right\} \nonumber\\ &=&
 \mathbb{E} \left[  e^{ p \rho \int_0^t \sqrt{V_s} d W^1_s - \frac{p^2 \rho^2}{2} \int_0^t V_s d s } \; e^{  \frac{p^2 -p}{2} \int_0^t V_s d s      }  \right] \nonumber\\ &=& 
  \mathbb{E}^\mathbb{Q} \;    e^{  \frac{p^2 -p}{2} \int_0^t V_s d s      }   ,
\end{eqnarray*}
where we used the law of iterated conditional expectation and the fact that $V_s$ is measurable with respect to $W^1$. The last inequality is a consequence of Girsanov theorem, where under $ \mathbb{Q}$, the process $V$ satisfies the stochastic differential equation
\begin{equation}
d V_t = \left(  a - ( b - \rho \sigma p )V_t\right)d t + \sigma \sqrt{V_t} d W^{1,\mathbb{Q}}_t
\end{equation}
with $ \mathbb{Q}-$ Brownian motion $W^{1,\mathbb{Q}} $. We are then reduced to the calculation of the moment generating function of the time average of the CIR process $V$ under $\mathbb{Q}$.  It follows that $F_p (t) = e^{a \varphi_p (t) + v \psi_p (t) }$, where $ \varphi_p (t) = \int_0^t \psi_p (s) d s $ and $ \psi_p$ is given by theorem~\ref{moment_esponentiels}.  
\end{remark}

\section{Conclusions}
\noindent We have presented a Tauberian result that relates the right tail behavior of the cumulative distribution function to the behavior of the moment generating function, for a random variable, whose moment generating function is finite only on part of the real line. We have shown that if the function $ \varphi : x \longmapsto \Lambda (\mu^\ast - \frac{1}{x}) :=  \ln \left( \mathbb{E} \; e^{(\mu^\ast - \frac{1}{x}) Z} \right)$ is regularly varying,  then the Fenchel-Legendre transform of $\Lambda$ gives a log-equivalent (we say $f$ and $g$ are log-equivalent if $\limsup_{x \to \infty} \frac{f(x) - g(x)}{\log (x)} =0$) of the logarithm of the complementary cumulative distribution, while the most recent result of this kind gives only an equivalence of type ($\lim_{x \to \infty} \frac{f(x) - g(x)}{x} =0$). Our results apply to financial models that generate distributions satisfying Assumption~\ref{hyp_laregdev} (several examples can be found in \cite{benaimfriz08}). We applied our results to an arbitrary superposition of the CIR process and its time integral. It would be interesting to have a similar result to Theorem~\ref{tauberian_theorem}  for $\mathbb{R}^d$ random variables. Such a result allows more understanding of the joint distribution of CIR process and its time integral as well as the conditional distribution of the CIR with respect to its time average. This will be subject to further investigation.

\newpage

 \appendix

\section{Solution of (\ref{eq-psi})}
\noindent To solve \eqref{eq-psi}, we write
\[
 \psi^2_{\lambda_1, \lambda_2 }(t)-2\frac{b}{\sigma^2}\psi_{\lambda_1, \lambda_2 }(t)
      +2\frac{\lambda_2}{\sigma^2}  =\left( \psi_{\lambda_1, \lambda_2 }(t)-\frac{b}{\sigma^2}\right)^2
            +\frac{2  \lambda_2 \sigma^2 -b^2}{\sigma^4}.
\]
There are three situations :
\begin{description}
\item[Case $  2  \lambda_2 \sigma^2 < b^2 $ ]
     Setting 
    \[
    \bar{\psi}_{\lambda_1, \lambda_2 }(t)=\psi_{\lambda_1, \lambda_2 }(t)-\frac{b}{\sigma^2}\;\mbox{ 
    and }\;\alpha=\sqrt{b^2-2  \lambda_2 \sigma^2}/\sigma^2,
    \]
    the equation \eqref{eq-psi} becomes
    \[
    \bar{\psi}'_{\lambda_1, \lambda_2 }(t)=\frac{\sigma^2}{2}\left(\bar{\psi}^2_{\lambda_1, \lambda_2 }(t)-\alpha^2\right),
    \] 
    which gives
    \[
    \frac{1}{2\alpha}\left(\frac{1}{\bar{\psi}_{\lambda_1, \lambda_2 }(t)-\alpha}
            -\frac{1}{\bar{\psi}_{\lambda_1, \lambda_2 }(t)+\alpha}\right) \bar{\psi}'_{\lambda_1, \lambda_2 }(t)
            =\frac{\sigma^2}{2}.
    \]
    Hence
    \[
    \ln\left|\frac{\bar{\psi}_{\lambda_1, \lambda_2 }(t)-\alpha}{\bar{\psi}_{\lambda_1, \lambda_2 }(t)+\alpha}\right|=
                     \alpha\sigma^2 t+\ln\left|\frac{\lambda_1   - \frac{b}{\sigma^2}-\alpha}{\lambda_1    - \frac{b}{\sigma^2}+\alpha}\right|.
    \]
  It follows that
\[
 \left\{ \begin{array}{ll}
 \bar{\psi}_{\lambda_1, \lambda_2 }(t) = - \alpha \frac{  ( C e^{ \alpha \sigma^2 t} +1 )^2 }{ C^2 e^{2 \alpha \sigma^2 t} -1}, & \textrm{if}~~ |\lambda_1    - \frac{b}{\sigma^2}| > \alpha, \\ \\
  \bar{\psi}_{\lambda_1, \lambda_2 }(t) = - \alpha \frac{  ( C e^{ \alpha \sigma^2 t} -1 )^2 }{ C^2 e^{2 \alpha \sigma^2 t} -1}, & \textrm{if}~~ |\lambda_1    - \frac{b}{\sigma^2}| \le \alpha,
 \end{array} \right.
\]    
where $\displaystyle C=\left|\frac{\lambda_1  - \frac{b}{\sigma^2}-\alpha}{\lambda_1    - \frac{b}{\sigma^2}+\alpha}\right|$.     Thus
    \[
 \left\{ \begin{array}{ll}
 \psi_{\lambda_1, \lambda_2 }(t) =\frac{b}{\sigma^2} - \alpha \frac{    C e^{ \alpha \sigma^2 t} +1  }{ C  e^{  \alpha \sigma^2 t} - 1}, & \textrm{if}~~ |\lambda_1   - \frac{b}{\sigma^2}| > \alpha, \\ \\
  \psi_{\lambda_1, \lambda_2 }(t) =\frac{b}{\sigma^2} - \alpha \frac{    C e^{ \alpha \sigma^2 t} -1   }{ C  e^{  \alpha \sigma^2 t} +1}, & \textrm{if}~~ |\lambda_1   - \frac{b}{\sigma^2}| \le \alpha.
 \end{array} \right.
\]

\item[Case $   2  \lambda_2 \sigma^2=b^2  $ ] 
The equation \eqref{eq-psi} can be written as
 \[
    \bar{\psi}'_{\lambda_1, \lambda_2 }(t)=\frac{\sigma^2}{2}\bar{\psi}^2_{\lambda_1, \lambda_2 }(t),
    \] 
    which gives
    \begin{eqnarray*}
\frac{1}{\bar{\psi}_{\lambda_1, \lambda_2 }(0)}-
\frac{1}{\bar{\psi}_{\lambda_1, \lambda_2 }(t)}&=&\frac{\sigma^2t}{2}.
\end{eqnarray*}
Thus
\[
\psi_{\lambda_1, \lambda_2 }(t)=\frac{ \lambda_1    - \frac{b}{\sigma^2} }{ 1 -\frac{\sigma^2}{2} ( \lambda_1  - \frac{b}{\sigma^2} )t} + \frac{b}{\sigma^2}.
\]
\item[Case $  2 \lambda_2 \sigma^2>b^2  $ ]
We set $\beta=\sqrt{2  \lambda_2 \sigma^2-b^2}/\sigma^2$. The equation \eqref{eq-psi} can be written as
\[
 \bar{\psi}'_{\lambda_1, \lambda_2 }(t)=\frac{\sigma^2}{2}\left(\bar{\psi}^2_{\lambda_1, \lambda_2 }(t)+\beta^2\right).
 \]
 It follows that
 \[
 \frac{\bar{\psi}'_{\lambda_1, \lambda_2 }(t)}{\bar{\psi}^2_{\lambda_1, \lambda_2 }(t)+\beta^2}=\frac{\sigma^2}{2}.
 \]
 Integrating both sides of this equation we obtain 
 \[
\frac{1}{\beta} \arctan (\bar{\psi}_{\lambda_1, \lambda_2 }(t)/\beta)=\frac{\sigma^2t}{2}+Cst.
 \]
 Thus
 \[
{\psi}_{\lambda_1, \lambda_2 }(t)= \frac{b}{\sigma^2}+
       \beta \tan\left(\beta\frac{\sigma^2t}{2} + \arctan\left( \frac{\lambda_1  \sigma^2 -b}{\beta\sigma^2} \right) \right).
 \]
\end{description}

Finally, the solution of \eqref{eq-psi} is given by the following table \\ \\ 
\begin{tabular}{|c|c|}

  \hline
  $ \lambda_2  < \frac{ b^2}{ 2 \sigma^2}$  &$ 
 \left\{ \begin{array}{ll}
 \psi_{\lambda_1, \lambda_2 }(t) =\frac{b}{\sigma^2} - \alpha \frac{    C e^{ \alpha \sigma^2 t} +1  }{ C  e^{  \alpha \sigma^2 t} - 1},~~~~~ \textrm{if}~~~~ |\lambda_1   - \frac{b}{\sigma^2}| > \alpha, \\  
  \psi_{\lambda_1, \lambda_2 }(t) =\frac{b}{\sigma^2} - \alpha \frac{    C e^{ \alpha \sigma^2 t} -1   }{ C  e^{  \alpha \sigma^2 t} +1},~~~~~ \textrm{if}~~~~ |\lambda_1   - \frac{b}{\sigma^2}| \le \alpha, 
 \end{array}  \right.
    $
   \\ 
   & where $\alpha =  \sqrt{b^2-2  \lambda_2 \sigma^2}/\sigma^2$  and $C= \left|\frac{\lambda_1  - \frac{b}{\sigma^2}-\alpha}{\lambda_1    - \frac{b}{\sigma^2}+\alpha}\right| $. \\
   \hline
   $ \lambda_2  = \frac{ b^2}{ 2 \sigma^2}$  & $\psi_{\lambda_1, \lambda_2 }(t)=\frac{ \lambda_1    - \frac{b}{\sigma^2} }{ 1 -\frac{\sigma^2}{2} ( \lambda_1  - \frac{b}{\sigma^2} )t} + \frac{b}{\sigma^2}.$ \\
 \hline
  $ \lambda_2  > \frac{ b^2}{ 2 \sigma^2}$  & $\psi_{\lambda_1, \lambda_2 }(t)= \frac{b}{\sigma^2}+
       \beta \tan\left(\beta\frac{\sigma^2t}{2} + \arctan\left( \frac{\lambda_1  \sigma^2 -b}{\beta\sigma^2} \right) \right),$ \\
       & where $\beta=\sqrt{2  \lambda_2 \sigma^2-b^2}/\sigma^2$.\\
  \hline  
\end{tabular}\\ 

and	 $t^\ast_{\lambda_1, \lambda_2}$ is given explicitly in terms of $\lambda_1$ and $\lambda_2$ by \\ \\
\begin{tabular}{|c|c|}
  \hline
  $ \lambda_2  < \frac{ b^2}{ 2 \sigma^2}$  and $ \lambda_1 > \frac{b}{\sigma^2}+ \sqrt{b^2-2  \lambda_2 \sigma^2}/\sigma^2 $ &$ 
 t^\ast_{\lambda_1, \lambda_2} = \frac{1}{\sqrt{b^2-2  \lambda_2 \sigma^2}} \ln \left(\frac{ \lambda_1 - b/\sigma^2 + \sqrt{b^2-2  \lambda_2 \sigma^2}/\sigma^2 }{ \lambda_1 - b/\sigma^2 -\sqrt{b^2-2  \lambda_2 \sigma^2}/\sigma^2}\right).
    $
   \\
   \hline
    $ \lambda_2  < \frac{ b^2}{ 2 \sigma^2}$  and $ \lambda_1 \le \frac{b}{\sigma^2}+ \sqrt{b^2-2  \lambda_2 \sigma^2}/\sigma^2 $ &$ 
 t^\ast_{\lambda_1, \lambda_2} = + \infty$.
    \\
   \hline
   $ \lambda_2  = \frac{ b^2}{ 2 \sigma^2}$  & $ t^\ast_{\lambda_1, \lambda_2} = \frac{2}{\sigma^2  ( \lambda_1  - \frac{b}{\sigma^2} )}    .$ \\
 \hline
  $ \lambda_2  > \frac{ b^2}{ 2 \sigma^2}$  & $t^\ast_{\lambda_1, \lambda_2}= \frac{2}{\sqrt{2  \lambda_2 \sigma^2-b^2}} \left( \frac{\pi}{2} -\arctan\left( \frac{\lambda_1  \sigma^2 -b}{\sqrt{2  \lambda_2 \sigma^2-b^2}} \right) \right).
  $ \\
  \hline  
\end{tabular}\\ 
\\

\section{Proof of Theorem~\ref{equiv_laplace}}
 \noindent The critical moment $ \mu^\ast_+ (t)$ is defined as solution of
 \[
  t^\ast_{\lambda_1, \lambda_2} (\mu^\ast_+ (t)) = t.
 \]    
There are only two situations for $t^\ast_{\lambda_1, \lambda_2}$:
\begin{description}
\item[Case $ t^\ast_{\lambda_1, \lambda_2} (\mu^\ast_+) = \frac{1}{\sqrt{b^2-2  \lambda_2 \mu^\ast_+\sigma^2}} \ln \left(\frac{ \lambda_1\mu^\ast_+ - b/\sigma^2 + \sqrt{b^2-2  \lambda_2\mu^\ast_+ \sigma^2}/\sigma^2 }{ \lambda_1\mu^\ast_+ - b/\sigma^2 -\sqrt{b^2-2  \lambda_2 \mu^\ast_+\sigma^2}/\sigma^2}\right) $] In this case we have, for $   \epsilon  $ positive and sufficiently small,
 \[ 
 \psi_{\lambda_1(\mu^\ast_+ - \epsilon), \lambda_2 (\mu^\ast_+ - \epsilon) }(t) =\frac{b}{\sigma^2} - \alpha \frac{    C e^{ \alpha \sigma^2 t} +1  }{ C  e^{  \alpha \sigma^2 t} - 1},
    \]
     where $\alpha = \alpha (\epsilon)=  \sqrt{b^2-2  \lambda_2 (\mu^\ast_+ - \epsilon)\sigma^2}/\sigma^2$  and 
     \[
     C=C(\epsilon)=  \frac{\lambda_1(\mu^\ast_+ - \epsilon)  - \frac{b}{\sigma^2}-\alpha}{\lambda_1(\mu^\ast_+ - \epsilon)    - \frac{b}{\sigma^2}+\alpha}  .
     \]
      We obtain, by writing the Taylor expansion of $C e^{ \alpha \sigma^2 t} $ with respect to $\epsilon$, that $\psi_{\lambda_1(\mu^\ast_+ - \epsilon), \lambda_2 (\mu^\ast_+ - \epsilon) }(t)$ can be written as
    \[
    \psi_{\lambda_1(\mu^\ast_+ - \epsilon), \lambda_2 (\mu^\ast_+ - \epsilon) }(t) = \frac{b}{\sigma^2} + \alpha (0) \; \frac{ 2 - c_1 \epsilon+ \mathcal{O} (\epsilon^2) }{ c_1 \epsilon+ \mathcal{O} (\epsilon^2)} = \frac{2 \alpha (0)/ c_1 }{ \epsilon } + e_1 (\epsilon) ,
    \]
    where $e_1 (\epsilon)$ is bounded. It follows that, for $\epsilon$ sufficiently small, 
  \[
  \ln \left( \mathbb{E} \; e^{ (\mu^\ast_+ - \epsilon)\lambda_1 V_t + (\mu^\ast_+ - \epsilon) \lambda_2 \int_0^t{ V_u d u}} \right)-  \frac{2 v \alpha (0)/ c_1 }{ \epsilon } ~ \sim ~ a  \varphi_{\lambda_1(\mu^\ast_+ - \epsilon), \lambda_2 (\mu^\ast_+ - \epsilon) }(t)  .
  \]
  On the other hand, we have
\begin{equation}
\varphi_{\lambda_1(\mu^\ast_+ - \epsilon), \lambda_2 (\mu^\ast_+ - \epsilon) }(t) = \frac{b}{\sigma^2} t - \alpha(\epsilon) \left(  \ln \left(  C e^{ \frac{\alpha \sigma^2}{2} t} - e^{ - \frac{\alpha \sigma^2}{2} t} \right) - \ln \left(  C -1\right) \right).
\end{equation}  
 We obtain that $\varphi_{\lambda_1(\mu^\ast_+ - \epsilon), \lambda_2 (\mu^\ast_+ - \epsilon) }(t) ~ = ~ \frac{2   }{\sigma^2} \ln \frac{1}{ \epsilon}+ e_2 (\epsilon), $ where $e_2 (\epsilon)$ is bounded.

\item[Case  $t^\ast_{\lambda_1, \lambda_2}= \frac{2}{\sqrt{2  \lambda_2 \sigma^2-b^2}} \left( \frac{\pi}{2} -\arctan\left( \frac{\lambda_1  \sigma^2 -b}{\sqrt{2  \lambda_2 \sigma^2-b^2}} \right) \right)
  $] In this case, for $0 < \epsilon \ll 1 $, we have
\[
 \left\{\begin{array}{l}
   \psi_{\lambda_1(\mu^\ast_+ - \epsilon), \lambda_2 (\mu^\ast_+ - \epsilon) }(t)=  \frac{b}{\sigma^2}+
       \frac{\sqrt{2  \lambda_2 (\mu^\ast_+ - \epsilon) \sigma^2-b^2}}{\sigma^2} \tan\left( g(t, (\mu^\ast_+ - \epsilon)) \right) ,\\ \\
     \varphi_{\lambda_1(\mu^\ast_+ - \epsilon), \lambda_2 (\mu^\ast_+ - \epsilon) }(t) =  \frac{b}{\sigma^2}  t+
       \frac{2}{\sigma^2}  \left(  \ln \cos  g(0, \mu^\ast_+ - \epsilon) - \ln \cos  g(t, \mu^\ast_+ - \epsilon)  \right),
         \end{array}
           \right.
 \]
 where
 \begin{equation}
       g (t, \mu) = \frac{\sqrt{2\mu \lambda_2 \sigma^2-b^2}}{ 2}  t + \arctan( \frac{ \lambda_1 \mu \sigma^2 -b}{ \sqrt{2\mu \lambda_2 \sigma^2-b^2}}).
 \end{equation}
We show by the same as before that for $ \epsilon$ sufficiently small, 
\[
\ln \left( \mathbb{E} \; e^{ (\mu^\ast_+ - \epsilon) \lambda_1 V_t + (\mu^\ast_+ - \epsilon) \lambda_2 \int_0^t{ V_u d u}} \right)  = \frac{x \sqrt{2 \mu^\ast_+ \sigma^2-b^2}}{\sigma^2 \partial_\mu g (t, \mu^\ast_+) }   \frac{  1 }{   \epsilon }   - \frac{2 a }{\sigma^2} \ln ( \epsilon) + e(\epsilon) ,
\] 
 where $e (\epsilon)$ is bounded.
\end{description}

\section*{Acknowledgments}
This research has been supported by a grant from Riksbankens Jubileumsfond (P10-0113:1). The author would also like to thank Professor Damien Lamberton for many useful discussions and anonymous referee who read the first version and helped me improve the presentation.

\end{document}